\documentclass[11pt]{amsart}
\usepackage[a4paper,margin=28mm]{geometry}
\usepackage[T1]{fontenc}
\usepackage{lmodern}
\usepackage{needspace}
\usepackage{amsmath,amssymb,amsthm}
\usepackage[hidelinks]{hyperref}
\newtheorem{theorem}{Theorem}[section]
\newtheorem{lemma}[theorem]{Lemma}
\newtheorem{proposition}[theorem]{Proposition}
\newtheorem{corollary}[theorem]{Corollary}
\theoremstyle{definition}
\newtheorem{definition}[theorem]{Definition}
\theoremstyle{remark}
\newtheorem{remark}[theorem]{Remark}
\newcommand{\R}{R}
\newcommand{\T}{\mathcal T}
\newcommand{\supp}{\operatorname{supp}}
\newcommand{\E}{\mathbb E}
\newcommand{\Prob}{\mathbb P}
\newcommand{\B}{\mathcal B}
\newcommand{\D}{\mathcal D}
\DeclareMathOperator{\Impart}{Im}
\DeclareMathOperator{\Repart}{Re}
\title[Divergence of decreasing rearranged Fourier series]{Divergence of decreasing rearranged Fourier series\\
on every infinite compact abelian group}
\author{Morten Nielsen}
\address{Department of Mathematical Sciences\\
  Aalborg University\\
  Thomas Manns Vej 23\\
  DK--9220 Aalborg\\
  Denmark}
\email{mnielsen@math.aau.dk}
\subjclass[2020]{Primary 43A50; Secondary 43A77, 42C10, 43A40}
\keywords{Decreasing rearrangements; Fourier series; compact abelian groups;
threshold sums; almost-everywhere divergence; Walsh functions; Baire category}
\begin{document}

\begin{abstract}
On every infinite compact Hausdorff abelian group there is a
complex-valued continuous function whose Fourier sums, taken over
coefficients above a decreasing magnitude threshold, are unbounded
almost everywhere. In fact, such functions form a dense \(G_\delta\)
in the space of continuous functions with its uniform norm.
Moreover, a divergent example may be chosen with Fourier support
contained in any prescribed infinite subgroup of the dual, while
avoiding any prescribed finite set of frequencies.
No metrizability or zero-dimensionality is assumed.
The proof combines K\"orner's finite circle and Walsh lemmas with a
finite construction on products of odd cyclic groups. A trichotomy
for the dual group, exact quotient transfer, and cyclic sampling
give finite examples on every group under consideration.
\end{abstract}

\maketitle

\section*{Introduction}
A natural way to reconstruct a function from its Fourier coefficients
is to sum the terms in decreasing order of coefficient magnitude.
However, building on Olevskii's methods for constructing divergent rearrangements
of orthogonal series \cite[pp.~59--68]{O75}, K\"orner showed that this
procedure can fail quite strongly: there exists an \(L^2\) function on
the circle whose decreasing rearranged Fourier series diverges unboundedly
almost everywhere \cite{K96}.
He subsequently obtained a continuous example \cite{K99}, and later
established the corresponding divergence phenomenon for the Walsh
system \cite{K06}.

The trigonometric and Walsh systems are both complete systems of
characters of infinite compact abelian groups. This common structure
naturally raises the question whether K\"orner's divergence result
reflects a general property of such character systems. In the present
paper we give an affirmative answer: on every infinite compact
Hausdorff abelian group \(G\), there is a continuous function whose
Fourier sums over coefficients above a decreasing magnitude threshold
are unbounded almost everywhere. In fact, such functions form a dense
\(G_\delta\) in \(C(G)\). No metrizability assumption is needed, and
the conclusion holds for every decreasing ordering of the nonzero
coefficients, regardless of the ordering within ties.

The main new ingredient is a finite construction on products
of odd cyclic groups. The circle and Walsh results alone do not
cover this case: an infinite product of copies of
\(\mathbb Z/p\mathbb Z\), with \(p\) odd, has neither a circle
quotient nor a binary product quotient. We construct uniformly
bounded polynomials on finite products whose threshold maxima are
arbitrarily large on sets of measure arbitrarily close to one.
Together with K\"orner's finite circle and Walsh lemmas, this construction
and a reduction using the dual group cover every infinite compact
abelian group.

Several related results provide context for this question.
C\'ordoba and Fern\'andez \cite[Theorem~1]{CF98} obtained
norm-divergence results on the circle for \(1\leq r<2\).
They also constructed \(L^r\) examples, \(1\leq r<4/3\),
with almost-everywhere unbounded threshold sums
\cite[Section~5, pp.~1137--1138]{CF98}.
In a different direction, Hewitt and Ross \cite{HR74} studied
rearrangements of Fourier-coefficient magnitudes on arbitrary compact
abelian groups. For Vilenkin systems,
Gosselin \cite{G73} proved almost-everywhere convergence in the natural
order under boundedness assumptions, while Gosselin and Young
\cite{GY75} studied prescribed rearrangements preserving convergence.
Episkoposian \cite{E11} studied norm divergence for greedy approximation
with respect to generalized Walsh systems. In contrast, adapting a
construction of Kostyukovsky and Olevskii \cite{KO97}, Nielsen \cite{N07}
constructed a uniformly bounded orthonormal almost greedy basis for
\(L^r(0,1)\), \(1<r<\infty\). Another line of work establishes greedy
convergence or prescribes Fourier-coefficient magnitudes after modifying a
function on a small set; see Episkoposian and Grigorian \cite{EG12},
Grigoryan and Sargsyan \cite{GS18}, and Episkoposian, Grigorian, and
Grigorian \cite{EGG24}. Oniani \cite{O22} studies divergence sets for fixed
enumerations of compact-group characters. 

The paper is organized as follows. Section~\ref{sec:odd}
develops the new finite construction on products of odd cyclic groups,
inspired by the tree methods of K\"orner and Olevskii.
Section~\ref{sec:finite-models} combines this construction with
K\"orner's circle and Walsh inputs and reduces the general
compact-group problem to these three model cases.
Sections~\ref{sec:construction}--\ref{sec:category} develop the
continuous examples and prove the category conclusion. Standard
compact abelian harmonic analysis is used throughout; see \cite{Rudin}.

\section{Statement and Fourier conventions}

Let \(G\) be a compact Hausdorff abelian group with normalized Haar
measure \(\mu_G\): we use the normalized regular Borel probability
measure, completed when discussing almost-everywhere assertions.
Its discrete dual \(\Gamma=\widehat G\) is written
multiplicatively. All function spaces considered are complex-valued. For \(f\in L^2(G)\)
and \(\delta>0\), define
\[
 \widehat f(\gamma)=\int_G f(x)\overline{\gamma(x)}\,d\mu_G(x),
 \qquad
 T_\delta^G f(x)=
 \sum_{\substack{\gamma\in\Gamma\\|\widehat f(\gamma)|\geq\delta}}
       \widehat f(\gamma)\gamma(x).
\]
These sums are finite, as checked below. Superscripts are omitted
when there is no ambiguity. Let $C(G)$ denote the continuous complex-valued functions on $G$ and put
\[
 \D(G)=\left\{f\in C(G):
 \limsup_{\delta\downarrow0}|T_\delta f(x)|=\infty
 \text{ for }\mu_G\text{-almost every }x\right\}.
\]
Our main result is the following. 
\begin{theorem}\label{main:theorem}
If \(G\) is infinite, then \(\D(G)\) is a dense \(G_\delta\) in
\(C(G)\) with its uniform norm. For every \(\rho>0\) and every finite
\(F\subseteq\widehat G\), there is an \(f\in\D(G)\) satisfying
\[
 \|f\|_\infty<\rho,\qquad
 \widehat f(\gamma)=0\quad(\gamma\in F).
\]
If \(G\) is finite, then \(\D(G)\) is empty.
\end{theorem}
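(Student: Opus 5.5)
The finite case is immediate, so I will dispose of it first. If \(G\) is finite, then \(\Gamma\) is finite, so \(T_\delta f\) is a finite sum for every \(\delta\). Moreover \(|T_\delta f(x)|\le\sum_\gamma|\widehat f(\gamma)|<\infty\) uniformly in \(\delta\). Hence \(\limsup_{\delta\downarrow0}|T_\delta f(x)|<\infty\) everywhere. Since Haar measure on a finite group gives positive mass to every point, no exceptional null set exists, and \(\D(G)=\emptyset\).

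For infinite \(G\), the plan has three stages. The first is a finite, quantitative building block, and this is the main obstacle. For every \(K>0\), \(\varepsilon>0\), and finite \(F\subseteq\Gamma\), I want a trigonometric polynomial \(P\) on \(G\) with \(\|P\|_\infty\le1\) and \(\widehat P=0\) on \(F\) such that
\[
\mu_G\Bigl(\bigl\{x:\max_{\delta>0}|T_\delta P(x)|>K\bigr\}\Bigr)>1-\varepsilon .
\]
To get it, I split according to the trichotomy for the dual group. Either \(\Gamma\) contains an element of infinite order, which gives a circle quotient of \(G\) after passing to a subgroup. Or \(\Gamma\) has infinitely many elements of order a power of \(2\), which gives a binary product quotient where K\"orner's Walsh lemma applies. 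Or else there is an infinite subgroup built from odd-order elements, which gives products of odd cyclic groups.

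In the first two cases I use K\"orner's finite circle and Walsh lemmas. In the odd case I use the new construction of Section~\ref{sec:odd}. The circle case needs cyclic sampling: a bounded polynomial on \(\mathbb T\) is replaced by one on a large finite cyclic group, or one pulled back along a character of large order. I then transfer to \(G\) by composing with the quotient map \(G\to G/H\). This transfer is exact: Fourier coefficients, threshold sums and Haar measure are all preserved under pullback. To avoid \(F\), I choose the quotient so that its dual, viewed inside \(\Gamma\), meets \(F\) only trivially. Alternatively I multiply by a character \(\chi\) outside the finite set \(F\cdot F^{-1}\)-translates. This preserves \(|T_\delta|\) pointwise.

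The second stage glues the blocks into a continuous function. I choose blocks \(P_n\) inductively with amplitudes \(a_n\) and \(\sum a_n<\rho\). Each \(P_n\) has huge level \(K_n\) relative to \(a_n\), and its spectrum avoids \(F\) and all earlier spectra. I also require \(\max|\widehat{P_n}|\) to sit strictly below \(\min\) of the nonzero coefficients of the earlier blocks, and strictly above the sum of all later ones. I achieve this by scaling the later \(a_m\) down fast and using the fact that the later blocks have finitely many coefficients. The function is \(f=\sum a_nP_n\). For thresholds \(\delta\) in the range of block \(n\), we get
\[
T_\delta f=\sum_{m<n}a_mP_m+T_{\delta/a_n}(a_nP_n).
\]
The first sum is bounded by \(\rho\), so \(\limsup|T_\delta f|\ge a_nK_n-\rho\) on a set of measure \(>1-\varepsilon_n\). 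Taking \(\varepsilon_n\) summable and applying Borel--Cantelli gives divergence almost everywhere. This also yields \(\|f\|_\infty<\rho\) and \(\widehat f|_F=0\).

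The third stage is the category statement. For density, given \(g\in C(G)\), I approximate it by a trigonometric polynomial \(q\) and add such an \(f\), with \(F\supseteq\supp\widehat q\) and tiny \(\rho\). The coefficients of \(q\) are fixed and finitely many, so \(T_\delta(q+f)=q+T_\delta f\) for small \(\delta\); hence \(q+f\in\D(G)\). For the \(G_\delta\) property, I write
\[
\D(G)=\bigcap_{N,k}U_{N,k},
\]
where \(U_{N,k}\) is the set of \(f\) such that some finite family of thresholds \(\delta<1/k\) gives \(\mu\{\max|T_\delta f|>N\}>1-1/N\). The maps \(f\mapsto T_\delta f\) are not continuous at ties, so I will instead use strict inequalities and the lower semicontinuity of measures of open sets. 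Making this countable intersection exactly equal to \(\D(G)\) is delicate, especially in the nonmetrizable case where \(\Gamma\) is uncountable. Each \(f\) still has countable spectrum, which I expect to rescue the argument.
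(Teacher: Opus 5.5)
\textbf{Gap 1: the case split in Stage 1.} Your finite case, the exact quotient transfer, modulation, and the gluing stage are sound and follow the paper. The case analysis in Stage~1, however, is wrong as stated. Splitting the torsion case into ``infinitely many elements of \(2\)-power order'' versus ``odd order'' does not produce the quotients you claim.

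Take \(G\) to be the \(2\)-adic integers. Then \(\widehat G\) is the Pr\"ufer group \(\mathbb Z(2^\infty)\), which has infinitely many elements of \(2\)-power order but only one element of order \(2\). So it contains no \(\bigoplus_j\mathbb Z/2\mathbb Z\), and \(G\) has no quotient onto \(K_2\). Similarly, \(\widehat G=\mathbb Z(3^\infty)\) or \(\widehat G=\bigoplus_{p\text{ odd prime}}\mathbb Z/p\mathbb Z\) contains no copy of \((\mathbb Z/p\mathbb Z)^2\) for any \(p\). Theorem~\ref{thm:finite}, by contrast, lives on \((\mathbb Z/p\mathbb Z)^D\) with \(p\) fixed and \(D\) large. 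For such groups your plan supplies no model.

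The split that works is by character orders:
\begin{enumerate}
 \item A character of infinite order gives \(\chi(G)=\mathbb T\) directly, so no sampling is needed there.
 \item Unbounded finite orders are handled by cyclic sampling (Lemma~\ref{main:sampling}). Pull a circle polynomial of degree \(d_0\) back along a character of order \(M>2d_0\), so there is no aliasing. Control the measure of the open set \(\{T^*_{\mathbb T}P>A\}\) by weak convergence of uniform measure on \(M\)-th roots of unity, together with inner regularity and Urysohn.
 \item Bounded exponent is handled by the structural fact that an infinite abelian group of bounded exponent contains \(\bigoplus_{j\ge1}\mathbb Z/p\mathbb Z\) for a \emph{single} prime \(p\). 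This follows from the primary decomposition and then showing \(A[p]\) is infinite by induction on the exponent. Independent generators of that subgroup give a surjection \(G\to K_p\) (Lemmas~\ref{main:bounded} and~\ref{main:primequotient}).
\end{enumerate}
You have cyclic sampling in hand, but you attached it to the infinite-order case, where it is unnecessary. The bounded-exponent lemma is the missing idea that yields a one-prime product.

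\textbf{Gap 2: openness in the \(G_\delta\) part.} You leave this open, and ``lower semicontinuity of measures of open sets'' does not address the real obstruction. The obstruction is that \(g\mapsto T_\delta g\) jumps when \(\delta\) equals a coefficient magnitude. The needed device is regular thresholds.
\begin{enumerate}
 \item Only finitely many magnitudes of \(\widehat f\) lie in \([\delta/2,\delta)\). Hence there is \(\delta'<\delta\) avoiding all magnitudes with \(T_{\delta'}f=T_\delta f\).
 \item All magnitudes then stay at distance at least some \(d>0\) from \(\delta'\). Since \(|\widehat g-\widehat f|\le\|g-f\|_\infty\), every \(g\) with \(\|g-f\|_\infty<d/2\) selects the same finite set \(S\).
 \item Therefore \(\|T_{\delta'}g-T_{\delta'}f\|_\infty\le|S|\,\|g-f\|_\infty\).
 \item Combine this with continuity from below: there is \(\alpha>0\) with \(\mu_G\{M_f>N+\alpha\}>1-1/N\), where \(M_f=\max_j|T_{\delta'_j}f|\). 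This makes your \(U_{N,k}\) open.
\end{enumerate}
The inclusion \(\D(G)\subseteq U_{N,k}\) uses that \(f\) has only countably many distinct threshold sums, plus continuity from below. The reverse inclusion works with your normalization \(1-1/N\), because the sets \(\{\sup_{0<\delta<1/k}|T_\delta f|>N\}\) decrease in both \(N\) and \(k\).

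Once this is done, your density argument is a valid alternative to the paper's. You add a glued example with \(F\supseteq\supp\widehat q\) to a polynomial approximant \(q\), which shows that \(\D(G)\) itself is dense, so Baire's theorem is not needed. The paper instead proves each \(U_{m,k}\) dense using a single finite block and then invokes Baire.
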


In particular, the example may have mean zero. We write
\[
 \supp\widehat f=\{\gamma\in\widehat G:\widehat f(\gamma)\ne0\}
\]
for the nonzero Fourier support; for a character polynomial we also
call this finite set its spectrum.

\Needspace{6\baselineskip}
We first check that the threshold sums and their maximal functions are well
defined, even when \(G\) is not metrizable.

\begin{lemma}\label{main:thresholds}
For \(f\in L^2(G)\), every positive-threshold sum is finite,
\(\supp\widehat f\) is countable, and
\[
 \#\{\gamma:|\widehat f(\gamma)|\geq\delta\}
 \leq\delta^{-2}\|f\|_2^2.
\]
There are at most countably many distinct threshold sums.
For each \(a>0\), the function
\(\sup_{0<\delta<a}|T_\delta f|\) is measurable. Moreover,
\(T_\delta f\to f\) in \(L^2(G)\) as \(\delta\downarrow0\).
\end{lemma}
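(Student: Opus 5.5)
The plan is to derive everything from Plancherel's theorem on \(G\). The characters of \(G\) form an orthonormal basis of \(L^2(G)\), so for \(f\in L^2(G)\) we have \(\sum_{\gamma\in\Gamma}|\widehat f(\gamma)|^2=\|f\|_2^2\). Applying Chebyshev's inequality to this sum gives
\[
 \delta^2\,\#\{\gamma:|\widehat f(\gamma)|\geq\delta\}
 \leq\sum_{\gamma}|\widehat f(\gamma)|^2=\|f\|_2^2 .
\]
This is the stated counting bound, and in particular each \(T_\delta f\) is a finite sum.

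Since
\[
 \supp\widehat f=\bigcup_{n\geq1}\{\gamma:|\widehat f(\gamma)|\geq 1/n\},
\]
the support is a countable union of finite sets, hence countable.

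For the threshold sums, \(T_\delta f\) depends on \(\delta\) only through the finite set \(S_\delta=\{\gamma:|\widehat f(\gamma)|\geq\delta\}\). As \(\delta\) decreases, \(S_\delta\) grows. It changes only when \(\delta\) crosses a value of \(|\widehat f(\gamma)|\), and these values form a countable set \(V\subseteq(0,\infty)\). Thus every \(S_\delta\) has the form \(\{\gamma:|\widehat f(\gamma)|\geq v\}\) with \(v\in V\), or is empty. So there are at most countably many distinct threshold sums.

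Measurability follows from the same observation. For \(a>0\), the family \(\{|T_\delta f|:0<\delta<a\}\) consists of countably many distinct functions. Each one is a finite combination of characters, so it is continuous, hence Borel. The supremum over an uncountable index set is therefore a supremum over a countable subfamily. I will make this explicit by indexing over \(\delta\in (V\cap(0,a))\cup\{\text{one fixed }\delta_0<a\}\); the last element covers the case where no value of \(V\) lies below \(a\), and I will check that this captures every \(S_\delta\) with \(\delta<a\). A countable supremum of Borel functions with values in \([0,\infty]\) is Borel measurable. This is the one point needing care in the non-metrizable setting. The argument never uses separability of \(G\); it only uses that the family of distinct functions is countable.

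Finally, by Plancherel,
\[
 \|f-T_\delta f\|_2^2=\sum_{|\widehat f(\gamma)|<\delta}|\widehat f(\gamma)|^2 .
\]
This is the tail of a convergent countable series. It tends to \(0\) by dominated convergence for sums: each term with \(\widehat f(\gamma)\ne0\) eventually drops out of the sum as \(\delta\downarrow0\), and the terms are dominated by the summable sequence \(|\widehat f(\gamma)|^2\). I do not expect a genuine obstacle; the only subtle point is the countable-reduction step for measurability.
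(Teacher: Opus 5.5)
Your proposal is correct and follows essentially the paper's route: Parseval (the paper uses Bessel on finite sets) for the counting bound, the union of the finite sets $\{|\widehat f|\geq 1/n\}$ for countability, measurability from there being only countably many distinct continuous threshold sums, and the $L^2$ tail of a convergent series (your dominated convergence replaces the paper's explicit $\varepsilon$-argument).

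One small repair concerns your explicit index set. Here $\delta_0$ cannot be an arbitrary number below $a$. If $V\cap(0,a)\neq\varnothing$, the sum $T_\delta f=T_af$ for $\delta\in(\max(V\cap(0,a)),a)$ is otherwise missed. For example, on $\mathbb T$ with $f(z)=z+\tfrac12z^2$, $a=3/4$, $\delta_0\leq1/2$, the true supremum at $z=-1$ is $1$, not $1/2$. So choose $\delta_0$ in that interval. Your preceding remark, that the supremum runs over countably many distinct functions, already proves measurability without this parametrization.
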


\begin{proof}
Distinct characters are orthonormal. Bessel's inequality applied
to arbitrary finite subsets gives the cardinality estimate.
The nonzero support is the union of the finite sets where
\(|\widehat f|\geq1/n\), \(n\geq1\), so it is countable.
Finite subsets of a countable set form a countable family;
each threshold sum is indexed by one such subset and is continuous.
This proves the measurability assertion. Completeness of the
characters and Parseval's identity give
\[
 \|f-T_\delta f\|_2^2
 =\sum_{\substack{\gamma\in\supp\widehat f\\
                         |\widehat f(\gamma)|<\delta}}
                  |\widehat f(\gamma)|^2\longrightarrow0,\qquad \text{as }\delta\downarrow0.
\]
Indeed, first make the square-sum outside a finite set small,
and then take \(\delta\) below its least nonzero magnitude.
\end{proof}

\Needspace{6\baselineskip}
The threshold formulation also removes any dependence on how coefficients of
equal magnitude are ordered.

\begin{corollary}\label{main:ties}
Let \(f\in\D(G)\). There is a full-measure set \(X_f\subseteq G\)
such that, for every enumeration \((\gamma_n)_{n\geq1}\) of
\(\supp\widehat f\) satisfying
\[
 |\widehat f(\gamma_{n+1})|\leq|\widehat f(\gamma_n)|,
\]
the partial sums \(\sum_{n=1}^M\widehat f(\gamma_n)\gamma_n(x)\)
are unbounded for every \(x\in X_f\). Unboundedness already holds
along the indices at the ends of complete coefficient-magnitude levels.
\end{corollary}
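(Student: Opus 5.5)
Take \(X_f\) to be the set of \(x\) with \(\limsup_{\delta\downarrow0}|T_\delta f(x)|=\infty\); it has full measure because \(f\in\D(G)\). The idea is that every threshold sum is a partial sum of any decreasing enumeration, taken at the end of a complete magnitude level.

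First I would describe the magnitude levels. By Lemma~\ref{main:thresholds}, for each \(\delta>0\) the set \(\{\gamma:|\widehat f(\gamma)|\geq\delta\}\) is finite, and \(\supp\widehat f\) is countable. We may assume the support is infinite. Otherwise every \(T_\delta f\) with \(\delta\) small equals one fixed trigonometric polynomial, so the \(\limsup\) is finite everywhere; this contradicts \(f\in\D(G)\) because \(\mu_G(G)=1>0\), so \(X_f\neq\emptyset\). The distinct nonzero magnitudes therefore form a sequence \(a_1>a_2>\cdots\) with \(a_k\downarrow0\). This uses that each level set \(\{|\widehat f|\ge\delta\}\) is finite: no accumulation can occur at a positive value, and \(\sup|\widehat f|\le\|f\|_2\). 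Put \(L_k=\{\gamma:|\widehat f(\gamma)|=a_k\}\), a finite set, and \(N_k=\#(L_1\cup\dots\cup L_k)\).

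Next, fix any enumeration \((\gamma_n)\) of \(\supp\widehat f\) with nonincreasing magnitudes. I claim that \(\{\gamma_1,\dots,\gamma_{N_k}\}=L_1\cup\dots\cup L_k\). Suppose some \(\gamma_n\) with \(n\le N_k\) had magnitude \(a_j\) with \(j>k\). By monotonicity, every later term would have magnitude at most \(a_j<a_k\). Then at least one element of \(L_1\cup\dots\cup L_k\), which has size \(N_k\), would never be enumerated, contradicting surjectivity. Consequently
\[
 S_{N_k}(x):=\sum_{n=1}^{N_k}\widehat f(\gamma_n)\gamma_n(x)=T_{a_k}f(x).
\]

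Finally, for \(\delta\in(a_{k+1},a_k]\) we have \(T_\delta f=T_{a_k}f\), so the threshold sums as \(\delta\downarrow0\) are exactly the values \(T_{a_k}f\), \(k\ge1\). Hence for \(x\in X_f\),
\[
 \sup_k|S_{N_k}(x)|=\limsup_{\delta\downarrow0}|T_\delta f(x)|=\infty,
\]
where the equality holds because the \(\limsup\) over \(\delta\downarrow0\) equals \(\limsup_k|T_{a_k}f(x)|\), and this is infinite exactly when the supremum is. So the partial sums along the complete-level indices \(N_k\) are unbounded. Since \(X_f\) was defined independently of the enumeration, the same set works for every enumeration.

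The only delicate point is the level-completeness argument in the second step, which relies on each \(L_k\) being finite and on the enumeration being surjective. Everything else is bookkeeping.
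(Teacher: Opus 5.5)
Your proof is correct and takes the same approach as the paper. You set $X_f$ to be the full-measure set where the threshold sums are unbounded, use finiteness of $\{|\widehat f|\geq\delta\}$ from Lemma~\ref{main:thresholds} together with infinitude of the support to list the magnitude levels, and identify each $T_{a_k}f$ with the partial sum ending at a complete level, independently of the ordering within ties; your surjectivity argument for $\{\gamma_1,\ldots,\gamma_{N_k}\}=L_1\cup\cdots\cup L_k$ just spells out a step the paper states in one line.
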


\begin{proof}
By Lemma~\ref{main:thresholds}, only finitely many coefficients
exceed any positive threshold. Thus the nonzero support can be
enumerated in nonincreasing magnitude, every coefficient is reached,
and each positive magnitude level has finite multiplicity.
Indeed, every nonempty set of remaining magnitudes has a largest
element: if its supremum is \(s>0\), only finitely many coefficients
have magnitude at least \(s/2\), so the supremum is attained among
that finite set.
For \(f\in\D(G)\), the support of \(\widehat f\) is infinite, since a polynomial
has bounded threshold sums. Every nonempty threshold sum is the
partial sum ending at its last complete magnitude level, independently
of the ordering within ties. Take \(X_f\) to be the full-measure set
where the threshold sums are unbounded. No intersection over orderings
is needed, nor is there any need to enumerate an uncountable dual or
insert zero coefficients.
\end{proof}

The theorem therefore gives pointwise divergence despite
\(L^2\) convergence, even for continuous functions.

\section{The finite construction on odd cyclic products}\label{sec:odd}

We now prove the finite odd-cyclic result that supplies the new
model case in the compact-group argument. The construction is stated
for every odd integer, although only odd primes are needed later.
All counting measures are normalized.

\subsection{Statement and normalization}

Fix an odd integer \(p\geq3\), not necessarily prime, and put
\(\R_p=\mathbb Z/p\mathbb Z\) and \(\omega=\exp(2\pi i/p)\).
On \(\R_p^D\), use normalized
counting measure \(\mu_D\), and write
\[
 \chi_\xi(x)=\omega^{\xi\cdot x},\qquad
 \widehat f(\xi)=p^{-D}\sum_{x\in\R_p^D}f(x)\omega^{-\xi\cdot x}.
\]
Thus \(f=\sum_{\xi\in\R_p^D}\widehat f(\xi)\chi_\xi\).
For \(\delta>0\), define
\[
 T_\delta f(x)=
 \sum_{|\widehat f(\xi)|\geq\delta}\widehat f(\xi)\chi_\xi(x),
 \qquad
 T^*f(x)=\max_{\delta>0}|T_\delta f(x)|.
\]
There are only finitely many distinct threshold sums. The following theorem
provides uniformly bounded polynomials whose threshold maximal functions are
large on sets of measure arbitrarily close to one.

\begin{theorem}\label{thm:finite}
Let \(p\geq3\) be odd, \(A>0\), and \(0<\varepsilon<1\). There exist a positive integer
\(D\) and a complex-valued character polynomial \(P\) on \(\R_p^D\)
such that
\[
 \|P\|_\infty\leq1,\qquad
 \mu_D\{x:T^*P(x)>A\}>1-\varepsilon,\qquad
 \widehat P(0)=0.
\]
\end{theorem}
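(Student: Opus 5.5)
The plan is to imitate the Olevskii--K\"orner tree construction, with \(\R_p^D\) in the role of the dyadic group. The proof has three stages: build a one-coordinate gadget, iterate it along a product of many coordinates, and then correct the threshold structure so that sup-norm control and large threshold sums hold together.

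\emph{Stage 1: the gadget.} On a single factor \(\R_p\) I will look for a mean-zero function \(u\) with \(\|u\|_\infty\le1\) and two further properties.
\begin{itemize}
\item Its coefficients fall into two magnitude groups, a ``large'' group \(S\) and a ``small'' group \(S^c\).
\item The partial sum \(\sum_{\xi\in S}\widehat u(\xi)\chi_\xi\) has real part at least some \(c>0\) on a set of \(x\) of measure \(\ge1/p\).
\end{itemize}
Oddness of \(p\) enters here. There is no real character of order \(2\), so I will take \(u=\alpha(\chi_1+\overline{\chi_1})+\beta\chi_{\xi_0}\) for suitable \(\xi_0\). This lets the large and small coefficients be placed at distinct frequencies with prescribed, different magnitudes. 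The large part then exceeds \(c\) at \(x=0\), while the full sum stays bounded.

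\emph{Stage 2: the tensor iteration.} Using coordinates \(x=(x_1,\dots,x_D)\), I will form a tree of products. The basic building block is
\[
\Phi_k(x)=\prod_{j<k}\tfrac{1}{p}\sum_{t}\chi_t(x_j-a_j)\cdot u_k(x_k),
\]
which is localized in the earlier coordinates. I will then sum \(P=N^{-1}\sum_{k}\Phi_k\), following the pattern of K\"orner's Walsh lemma.

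The magnitudes \(\widehat P\) decrease geometrically in \(k\), and the large parts are arranged so that they appear at strictly larger magnitude than everything at later levels. With that ordering, \(T_\delta P(x)\) for a well-chosen \(\delta\) equals a sum of ``large halves'' of the gadgets along the path determined by \(x\). That sum is about \(c\cdot(\text{number of levels where }x\text{ is in the good set})/N\), which grows linearly in the number of levels. A law-of-large-numbers estimate over the independent coordinates then shows that \(T^*P>A\) except on a set of measure \(<\varepsilon\).

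\emph{Stage 3: sup-norm control and spectral bookkeeping.} This is the main obstacle: keeping \(\|P\|_\infty\le1\) while the threshold sums become large. Naive sums of \(N\) gadgets have sup norm of order \(N\), not \(1\). My first attempt will be the Riesz-product or ``telescoping'' trick from Olevskii's method. I will choose the pieces as differences \(g_k-g_{k-1}\) of bounded functions \(g_k\), so that the full sum telescopes to a function with sup norm at most \(1\). Only the threshold partial sums, which cut each difference in half, fail to telescope.

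Once the decay of magnitudes across levels is fixed, the frequencies used at different levels have to be made disjoint. I will do this by letting level \(k\) use the coordinates in a block \(B_k\), so that the spectra live on distinct coordinate blocks and coefficients of different pieces never add. Finally, \(\widehat P(0)=0\) is automatic because each gadget has mean zero. If needed, I will perturb the magnitudes slightly to break ties, so that the intended threshold ordering is exact.
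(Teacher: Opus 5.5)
Your proposal has a genuine gap at its central step: the mechanism that makes threshold sums large.

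First, Stage~2 as written cannot give growth. With $P=N^{-1}\sum_k\Phi_k$, every threshold sum is bounded by $N^{-1}\sum_k\|\Phi_k\|_\infty\le\max_k\|u_k\|_\infty$, so your count of good levels is divided by $N$. More fundamentally, no fixed threshold can work at all. $T_\delta P$ is an orthogonal projection of $P$, so $\|T_\delta P\|_2\le\|P\|_2\le\|P\|_\infty\le1$ and hence $\mu_D\{|T_\delta P|>A\}\le A^{-2}$. Your coefficient levels are ordered by depth, identically on every branch. So the threshold that ``selects the large halves of all levels'' is one and the same $\delta$ for every $x$, and this bound rules it out whenever $A^2(1-\varepsilon)\geq1$. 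The heuristic ``$c$ times the number of good levels'' also fails for a second reason. Each large half has mean zero in its own coordinate (because $\widehat u(0)=0$), so its real part is negative off the good set. The large halves therefore form a martingale of size about $\sqrt N$, not $N$. The telescoping idea $g_k-g_{k-1}$ is the right instinct for sup-norm control; it corresponds to the paper's stopped walk $S_{\tau\wedge N}$, bounded by $\beta\approx\sqrt{2N/\varepsilon}$. But the largeness must come from a maximum over thresholds in which the maximizing threshold depends on the point.

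That requires ordering coefficient levels along the tree in a path-dependent way, which is the idea you are missing. In the paper each node $v$, not each depth, receives its own level $p^{-(C+r_0(v))}$. The order $r_0$ lists the subtree of a child with positive imaginary label before its parent, and that of a child with negative imaginary label after it. Then on every path all negative imaginary increments precede all positive ones. So some path-dependent threshold captures at least $\frac13\sum_j|\Impart Z_j|\geq N/12$ on a set of measure near one.

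Implementing this needs node functions supported on the node's cylinder $E_v$ whose nonzero coefficients all share one modulus, even after the stopping rule has removed an irregular family of branches. Your point-cylinder factors $\prod_{j<k}\frac1p\sum_t\chi_t(x_j-a_j)$ give this for a single branch. (As written, though, $\Phi_k$ then lives on a set of measure $p^{1-k}$, so a typical point sees only the first level.) Summing branch-dependent pieces over a level yields $\widehat{1_E}$ for an irregular $E$, whose coefficients have mixed magnitudes, so thresholds would slice through nodes.

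The paper resolves this with three devices:
\begin{itemize}
\item the localized chirp $1_E(x)\omega^{X\cdot Y}$ on fresh coordinates, which has a flat spectrum of modulus $p^{-d}$ regardless of $E$;
\item a common unimodular chirp multiplier $k$ that pushes all earlier levels down while preserving the pointwise identity $P_v=kh_v$;
\item a private coordinate $s_v$ for spectral disjointness.
\end{itemize}
Tie-breaking perturbations are beside the point: equal moduli within a node are exactly what is wanted.
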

The proof of Theorem~\ref{thm:finite} relies on the technical
lemmas~\ref{lem:chirp}--\ref{lem:moments} below.
\subsection{An exact localized-chirp identity}

\Needspace{6\baselineskip}
The construction requires polynomials that are localized in the group while
all their nonzero Fourier coefficients have the same magnitude.

\begin{lemma}\label{lem:chirp}
Let \(E\subseteq\R_p^n\) and \(d\geq n\). Write
\(X=(x,z)\in\R_p^n\times\R_p^{d-n}\), and let \(Y\in\R_p^d\).
The function
\[
 F_E(X,Y)=1_E(x)\omega^{X\cdot Y}
\]
satisfies \(|F_E|=1_E(x)\) and
\begin{equation}
 \widehat F_E(a,b)=
 p^{-d}1_E(b_1,\ldots,b_n)\omega^{-a\cdot b}.
\end{equation}
In particular, \(|\widehat F_E(a,b)|\in\{0,p^{-d}\}\), also when
\(E=\varnothing\).
\end{lemma}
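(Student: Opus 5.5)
This is a direct computation on the group \(\R_p^{2d}\), with variables \((X,Y)\) and dual variables \((a,b)\in\R_p^d\times\R_p^d\). The character pairing is \(\omega^{a\cdot X+b\cdot Y}\) and the measure is normalized counting measure with total mass \(p^{2d}\).

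The modulus identity is immediate, since \(|\omega^{X\cdot Y}|=1\). For the coefficient, write
\[
 \widehat F_E(a,b)=p^{-2d}\sum_{X,Y}1_E(x)\,\omega^{X\cdot Y-a\cdot X-b\cdot Y}.
\]
Next, sum over \(Y\in\R_p^d\) first, holding \(X\) fixed. The inner sum is \(\sum_Y\omega^{(X-b)\cdot Y}\). By character orthogonality on \(\R_p^d\) this equals \(p^d\) if \(X=b\) and \(0\) otherwise. Orthogonality holds for any integer \(p\), not only primes, because each coordinate sum \(\sum_{t\in\R_p}\omega^{ct}\) vanishes unless \(c\equiv0\pmod p\).

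After this step only the term \(X=b\) survives. The value of \(1_E(x)\) becomes \(1_E(b_1,\ldots,b_n)\), and the remaining phase is \(\omega^{-a\cdot b}\). The prefactor is \(p^{-2d}\cdot p^d=p^{-d}\), which gives the stated formula.

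The final claim \(|\widehat F_E(a,b)|\in\{0,p^{-d}\}\) follows from the formula since \(|\omega^{-a\cdot b}|=1\). When \(E=\varnothing\) we have \(F_E\equiv0\), and every coefficient is \(0\).

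There is no real obstacle here. The only points needing care are the sign conventions and the ordering of coordinates: \(x\) consists of the first \(n\) coordinates of \(X\), so the constraint \(X=b\) makes \(x\) equal to \((b_1,\ldots,b_n)\).
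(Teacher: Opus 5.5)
Your proposal is correct and follows the paper's proof: you expand the coefficient over \(\R_p^{2d}\), sum over \(Y\) first using coordinatewise orthogonality (valid for any integer \(p\)), and keep only the term \(X=b\), which gives the prefactor \(p^{-2d}\cdot p^{d}=p^{-d}\) and the phase \(\omega^{-a\cdot b}\). One wording slip: the measure is normalized to total mass one on a group with \(p^{2d}\) elements, which is exactly what your factor \(p^{-2d}\) encodes.
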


\begin{proof}
The assertion about \(|F_E|\) is immediate. At a frequency
\((a,b)\in\R_p^d\times\R_p^d\), normalization in both groups gives
\[
 \widehat F_E(a,b)
 =p^{-2d}\sum_X1_E(x)\omega^{-a\cdot X}
       \sum_Y\omega^{(X-b)\cdot Y}.
\]
The inner sum factors into \(d\) one-dimensional character sums:
\[
 \sum_{Y\in\R_p^d}\omega^{(X-b)\cdot Y}
 =\prod_{\ell=1}^d\sum_{y\in\R_p}
       \omega^{(X_\ell-b_\ell)y}.
\]
Each factor is \(p\) if \(X_\ell=b_\ell\) and zero otherwise. Hence
the product is \(p^d\) precisely when \(X=b\). Only that term remains
in the outer sum. Its first \(n\) coordinates equal
\((b_1,\ldots,b_n)\), and therefore
\[
 \widehat F_E(a,b)
 =p^{-d}1_E(b_1,\ldots,b_n)\omega^{-a\cdot b},
\]
as claimed. The final assertion follows because the last factor is
unimodular whenever the indicator is nonzero.
\end{proof}

We shall also use the following two immediate facts. If a function
\(f(x)\) is independent of a fresh coordinate \(s\), then multiplying
it by the character \(\omega^s\) produces the function
\(f(x)\omega^s\), whose nonzero Fourier coefficients all have
frequency one in the \(s\)-coordinate. Moreover, the product of
functions depending on disjoint sets of coordinates has Fourier
coefficients given by the tensor product of their coefficients.
In particular,
\[
 Q(u,v)=\omega^{u\cdot v},\qquad u,v\in\R_p^m,
\]
is unimodular, takes values among the \(p\)-th roots of unity, and has
all its Fourier coefficients of modulus \(p^{-m}\).
When \(m=0\), this means \(Q=1\).

\subsection{Realizing a tree with ordered coefficient levels}
Fix \(N\geq 1\), and let
\[
\T_N
=
\bigcup_{j=0}^{N-1}\{0,\ldots,p-1\}^j,
\qquad
L=|\T_N|
=
\frac{p^N-1}{p-1}.
\]
We use the usual rooted-tree terminology. The root is the empty word $\varnothing$, the depth of a word $v$ is its length $|v|$, and the children of $v$ are the words $va$, $0 \leq a \leq p-1$,  where \(va\) denotes concatenation. The node $v$ is the parent of the nodes $va$. We say that a node $u$ is an ancestor of $v$ if $u$ is an initial segment of $v$, with $u=v$ allowed.

The following lemma realizes the tree by functions with pairwise
disjoint spectra. The rank of a vertex determines the common magnitude
of the nonzero Fourier coefficients of the corresponding function.
Consequently, thresholding the Fourier coefficients will select the
vertices in the prescribed rank order.

We set
\[
 \zeta_a=\omega^a,\qquad a\in\{0,\ldots,p-1\}.
\]

\begin{lemma}\label{lem:realize}
Fix \(N\geq1\) and any bijection \(r:\T_N\to\{1,\ldots,L\}\).
There exist an integer \(D\geq1\), an integer \(C\),
sets \(E_v\subseteq\R_p^D\) for \(|v|\leq N\), a function
\(k:\R_p^D\to\{1,\omega,\ldots,\omega^{p-1}\}\), and functions
\(P_v:\R_p^D\to\mathbb C\) for \(v\in\T_N\), each a character polynomial, with the following properties:
\begin{enumerate}
 \item \(E_\varnothing=\R_p^D\). For every \(v\in\T_N\), the \(p\) sets
       \(E_{va}\), \(0\leq a\leq p-1\), partition \(E_v\). Moreover,
       \(\mu_D(E_v)=p^{-|v|}\) for every \(v\) with \(|v|\leq N\).
 \item For \(v\in\T_N\), the function
       \[
        h_v:\R_p^D\to\{0,\zeta_0,\ldots,\zeta_{p-1}\},\qquad
        h_v=\sum_{a=0}^{p-1}\zeta_a1_{E_{va}},
       \]
       vanishes outside \(E_v\) and equals \(\zeta_a\) on \(E_{va}\);
       and \(P_v=k\,h_v\).
 \item The Fourier supports of the \(P_v\) are pairwise disjoint.
 \item The exponents \(C+r(v)\) are nonnegative, and, for
       \(\xi\in\R_p^D\),
       \[
        |\widehat P_v(\xi)|\in\{0,p^{-(C+r(v))}\}.
       \]
       Moreover, \(\widehat P_v(0)=0\).
       Thus the nonzero coefficients have equal modulus within each
       node, while these moduli strictly decrease as \(r(v)\) increases.
\end{enumerate}
\end{lemma}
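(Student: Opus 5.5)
The plan is to build everything from independent ``digit'' coordinates together with chirp factors of the form in Lemma~\ref{lem:chirp}, and then to read off the Fourier coefficients by the tensor-product rule. Write \(\R_p^D=\R_p^N\times W\), with coordinates \(t=(t_1,\ldots,t_N)\) in the first factor and \(W\) a product of auxiliary coordinate blocks. Put
\[
 E_v=\{t_1\ldots t_{|v|}=v\},\qquad h_v=1_{E_v}(t)\,\omega^{t_{|v|+1}} \quad(v\in\T_N).
\]
Property (1) is then immediate, since the \(t_\ell\) are uniform and independent, so \(\mu_D(E_v)=p^{-|v|}\). Property (2) also holds: on \(E_{va}\) we have \(t_{|v|+1}=a\), so \(h_v=\zeta_a\) there. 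It remains to choose a single unimodular \(k\) with values in the \(p\)-th roots of unity such that \(k h_v\) has a flat spectrum of the prescribed height, disjoint across \(v\), and zero mean.

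For \(k\), the basic building block is the chirp of Lemma~\ref{lem:chirp}. Multiplying \(h_v\) by \(\omega^{X\cdot Y}\), where \(X=(t,z)\), gives \(1_{E_v}(t)\,\omega^{X\cdot(Y+e_{|v|+1})}\). After the change of variable \(Y\mapsto Y+e_{|v|+1}\), this has exactly the structure of \(F_{E_v}\) in Lemma~\ref{lem:chirp}, translated in frequency. Hence all of its nonzero coefficients have modulus \(p^{-d}\), and its spectrum is \(\{(a,b): (b_1,\ldots,b_N)-e_{|v|+1}\text{-shifted lies in }E_v\}\).

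The key bookkeeping point is that the frequency \(b\) determines \(v\): \(b\) records the \(t\)-digits of \(E_v\), shifted by \(e_{|v|+1}\). This should already separate spectra of nodes at different depths and with different prefixes. To guarantee disjointness and a nonzero \(s\)-frequency, which gives \(\widehat P_v(0)=0\), I would in addition multiply by \(\omega^{s}\) in a fresh coordinate, as in the remark after Lemma~\ref{lem:chirp}.

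To make the heights depend on \(r(v)\), I would add, for each node \(w\), an extra block \((u_w,w'_w)\in\R_p^{m_w}\times\R_p^{m_w}\) carrying the exponent \(1_{G_w}(t)\,u_w\cdot w'_w\). Here \(G_w\) is a \(t\)-measurable set chosen so that \(G_w\cap E_v\) is either all of \(E_v\) or empty. The natural choice is to make \(G_w\) depend only on the digits at levels \(\le|w|\), so that \(G_w\) is constant on the relevant region of \(E_v\). Each active block contributes a flat factor \(p^{-m_w}\), by the tensor product of the \(Q\)-function, and each inactive block contributes the constant \(1\). The height for node \(v\) is then \(p^{-(d+\sum_{w\text{ active on }E_v}m_w)}\).

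The goal is to choose the sets \(G_w\) and integers \(m_w\ge0\) so that this exponent equals \(C+r(v)\) for every \(v\). Since \(r\) is an arbitrary bijection, the plan is to activate, on \(E_v\), blocks indexed by \(v\) itself, with sizes tuned so that the totals hit \(r(v)\) after adding a large common constant \(C\). Extra \(\omega^{X\cdot Y}\) dimensions can absorb parity and offset issues.

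The main obstacle is exactly this step: \(k\) is one common function, while \(E_v\) contains the descendants' sets \(E_{va}\). So an activation attached to node \(v\) is also seen by \(h_u\) for ancestors \(u\) of \(v\). On \(E_u\), such a block is active only on a subset, and the tensor argument then fails.

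To handle this, I would make the block for \(w\) depend on \(t\) through a chirp localized as in Lemma~\ref{lem:chirp}. That is, use \(1_{E}(x)\omega^{X\cdot Y}\) with the indicator absorbed into a single flat chirp whose dimension is constant on \(E_v\). Concretely, I would choose the auxiliary dimension as a function of the depth-\(|v|\) prefix and the digit \(t_{|v|+1}\) only, and check that the coefficient computation of Lemma~\ref{lem:chirp} still gives a single modulus. If that fails, I would instead use a separate group coordinate to index the tree level, so that each \(P_v\) sees only its own block, and place each node on its own disjoint chirp block of dimension \(C+r(v)\).
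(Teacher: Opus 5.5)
There is a genuine gap, and it is exactly the step you flag as the main obstacle. Once you fix \(E_v\) as coordinate cylinders and set \(h_v=1_{E_v}(t)\,\omega^{t_{|v|+1}}\), no common unimodular \(k\) can satisfy (2)--(4). This holds whatever blocks, localized chirps or auxiliary dimensions you use, and the root and its children already give a contradiction.

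Write \(\R_p^D=\R_p\times G'\), with \(t_1\) the first factor and \(t_2\) a coordinate of \(G'\). Let \(k_a=k(a,\cdot)\) on \(G'\), \(S_a=\supp\widehat{k_a}\), and let \(e_2\) be the frequency of \(\omega^{t_2}\). Then
\[
 \widehat{kh_a}(\xi_1,\eta)=p^{-1}\omega^{-a\xi_1}\,\widehat{k_a}(\eta-e_2),
 \qquad
 \widehat{kh_\varnothing}(\xi_1,\eta)=\widehat k(\xi_1-1,\eta)
 =p^{-1}\sum_{a}\omega^{-a(\xi_1-1)}\widehat{k_a}(\eta).
\]
Property (3) for the children forces the \(S_a\) to be pairwise disjoint. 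Property (4) forces \(|\widehat{k_a}|\in\{0,p^{1-(C+r(a))}\}\). Hence on the set \(\R_p\times S_a\), which is nonempty because \(|k|=1\), the root polynomial has coefficient modulus exactly \(p^{-(C+r(a))}\). Flatness of \(kh_\varnothing\) then forces \(r(a)=r(\varnothing)\), contradicting injectivity of \(r\).

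The same phenomenon explains why your concrete steps fail:
\begin{enumerate}
 \item With \(k=\omega^{X\cdot Y}\), every node gets the same level \(p^{-d}\). Moreover \(\supp\widehat{kh_v}=\{(a,b):(b_1,\ldots,b_N)\in E_v\}\), which is nested for ancestor--descendant pairs, so \(b\) does not determine \(v\).
 \item An activation set \(G_w\) that is all-or-nothing on every \(E_v\) is all-or-nothing on \(E_\varnothing=\R_p^D\), hence globally trivial.
 \item A fresh \(\omega^{s}\) placed in the common \(k\) separates nothing. A private \(\omega^{s_v}\) would have to live in \(h_v\), which your formula for \(h_v\) excludes.
 \item Your fallback does not explain how \(P_v=kh_v\) survives with one common \(k\), and by the computation above it cannot.
\end{enumerate}

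The missing idea is to stop prescribing the children in advance. The paper builds the tree top-down.
\begin{enumerate}
 \item For a node \(v\) whose set \(E_v\) already exists, it \emph{defines} \(P_v=1_{E_v}(x)\,\omega^{X_v\cdot Y_v+s_v}\) on fresh private coordinates. Here \(X_v=(x,z_v)\) has dimension \(d_v=C+r(v)\geq n\).
 \item Lemma~\ref{lem:chirp} then gives the exact level \(p^{-d_v}\). The private coordinate \(s_v\) gives spectral disjointness and \(\widehat P_v(0)=0\).
 \item It only then sets \(h_v:=\overline{k}P_v\) and defines \(E_{va}\) as the level sets of \(h_v\) on \(E_v\). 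These have measure \(p^{-1}\mu_D(E_v)\), because \(h_v\) is \(\omega^{s_v}\) times a root of unity independent of \(s_v\).
 \item The common factor \(k\) consists only of unlocalized chirps \(Q=\omega^{u\cdot w}\), multiplied simultaneously into \(k\) and into all existing \(P_u\). This lowers every existing level by the same factor \(p^{-M}\), so \(d_v\geq n\) can be met even when \(r\) interleaves new levels between old ones. It leaves every \(h_u\) and \(E_u\) unchanged.
\end{enumerate}
The children are thus level sets in sheared, nonlinear coordinates rather than coordinate cylinders. That flexibility is exactly what your framework lacks.
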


\begin{proof}
We proceed in rounds \(j=0,1,\ldots,N-1\). At the end of round \(j\),
the polynomials \(P_v\) have been constructed for all \(|v|\leq j\),
and the sets \(E_v\) have been constructed for all \(|v|\leq j+1\).
Thus the sets needed to build the depth-\((j+1)\) polynomials are
available when the next round begins. Previously constructed objects
are always lifted to an enlarged product group by ignoring the new
coordinates. Their current versions may be multiplied by common
chirps in later rounds; the sets \(E_v\) and the functions \(h_v\)
remain unchanged.

\emph{Round \(0\).}
On \(\R_p\), let \(k=1\), \(P_\varnothing(x)=\omega^x\), and
\(C=-r(\varnothing)\). 
Let \(E_\varnothing=\R_p\), and define its
\(p\) children by
\[
 E_a=\{x\in\R_p:\omega^x=\zeta_a\}=\{a\},
 \qquad a=0,\ldots,p-1,
\]
since \(\zeta_a=\omega^a\) and \(\omega\) is a primitive \(p\)-th
root of unity. These singletons partition \(E_\varnothing\), each
with normalized measure \(1/p\), and
\(P_\varnothing=\sum_a\zeta_a1_{E_a}=\omega^x\).
The only nonzero Fourier coefficient of \(P_\varnothing\) has modulus one.
All assertions hold for the root and its children; in particular,
the root exponent is \(C+r(\varnothing)=0\).
This completes round \(0\): the depth-zero polynomial and the sets
through depth one have been constructed.

\emph{Round \(j\), where \(1\leq j<N\).}
Suppose rounds \(0,\ldots,j-1\) have been completed on \(\R_p^n\).
Thus the sets \(E_v\) are available for \(|v|\leq j\), the
polynomials \(P_v\) are available for \(|v|<j\), and properties
\emph{(1)}--\emph{(4)} hold for all objects constructed so far, with
a common function \(k\) and integer \(C\). Round \(j\) constructs
simultaneously the polynomials \(P_v\), \(|v|=j\), and the sets
\(E_{va}\) associated with the child nodes \(va\), \(|va|=j+1\).

The round has three operations. First, a common chirp shifts all
previous coefficient levels by the same amount. Second, a localized
chirp realizes each depth-\(j\) node on its prescribed set \(E_v\).
Finally, a private coordinate separates its spectrum from every other
node and divides \(E_v\) into \(p\) children of equal measure.

All existing polynomials have coefficient levels
\(p^{-(C+r(v))}\). Choose an integer \(M\geq0\) large enough that
\begin{equation}
 d_v:=C+M+r(v)\geq n\qquad (|v|=j).
\end{equation}
For example, take
\[
 M=\max\bigl\{0,\ n-C-\min_{|v|=j}r(v)\bigr\}.
\]
On \(2M\) fresh coordinates introduce the unimodular chirp
\(Q(u,w)=\omega^{u\cdot w}\). Replace
\[
 k\ \text{by}\ k'=Qk,\qquad
 P_v\ \text{by}\ P'_v=QP_v,\quad |v|<j,
 \qquad C\ \text{by}\ C'=C+M.
\]
The old identities \(P'_v=k'h_v\) hold with the \emph{same}
functions \(h_v\) and the same sets. By Lemma~\ref{lem:chirp} with
\(E=\R_p^M\), every coefficient of \(Q\) has modulus \(p^{-M}\).
Since the old variables and the variables of \(Q\) are disjoint,
the Fourier coefficients of \(QP_v\) are tensor products of those
of \(Q\) and \(P_v\). Thus every old coefficient level is multiplied
by \(p^{-M}\). Tensoring each old spectrum with the same set of new
frequencies also preserves pairwise disjointness.
Since \(M\geq0\), no old exponent decreases; each new exponent
\(C'+r(v)=d_v\) is at least \(n\geq1\).
This verifies nonnegativity of all exponents at every round.

For \(|v|<j\), \(P'_v\) denotes the leveled-up polynomial \(QP_v\)
just defined. We now reuse the same primed notation for each new
depth-\(j\) node, but with a different defining formula, since no
old polynomial exists yet at such a node; both cases are unprimed
together at the end of the round.

For each new node \(v\) of depth \(j\), introduce its own fresh
coordinates
\[
 z_v\in\R_p^{d_v-n},\qquad
 Y_v\in\R_p^{d_v},\qquad s_v\in\R_p.
\]
All these coordinate blocks are disjoint, both from one another
and from those of \(Q\). Let \(x\in\R_p^n\) denote the entire
pre-round ambient variable, and set \(X_v=(x,z_v)\). Thus \(x\)
contains only the coordinates present before the common-chirp extension,
so \(X_v\) does not include the newly introduced coordinates \((u,w)\).
 We regard \(E_v\)
as a cylinder set in the enlarged product group and define
\begin{equation}
 P'_v=1_{E_v}(x)\omega^{X_v\cdot Y_v+s_v}.
\end{equation}
Consequently, \(P'_v\) is independent of \((u,w)\).
Ignoring the last character for a moment, Lemma~\ref{lem:chirp}
gives coefficient magnitudes \(0\) or \(p^{-d_v}\). Multiplication by
\(\omega^{s_v}\) merely shifts the frequency in the private coordinate
\(s_v\) from zero to one, without changing these magnitudes. Hence
\(P'_v\) has exactly the desired coefficient level
\(p^{-d_v}=p^{-(C'+r(v))}\). Its independence of all other newly
introduced coordinate blocks forces frequency zero in those blocks.

The coordinate \(s_v\) verifies spectral disjointness without any
assumption on the shape of \(E_v\). Every nonzero coefficient of
\(P'_v\) has \(s_v\)-frequency one. All old polynomials and all
other polynomials introduced in this round are independent of
\(s_v\), and therefore
have frequency zero there. Consequently the new spectra are
pairwise disjoint and are disjoint from every old spectrum.
In later rounds, tensoring existing polynomials with a common chirp
on fresh coordinates preserves every separation already established.
A subsequently introduced node \(t\) is separated from all existing
nodes by its own fresh coordinate \(s_t\); it need not have
frequency zero in the earlier coordinate \(s_v\).
The same observation proves \(\widehat P'_v(0)=0\).
The \(s_v\)-frequency of an existing nonroot polynomial remains one
under every later multiplication by a fresh-coordinate chirp.
For the root, the original \(x\)-frequency similarly remains one.
Thus the zero-mean property also persists throughout the construction.

It remains to define the new children. 
We regard the previously constructed set \(E_v\) and function \(k'\) on the fully enlarged product group by cylindrical extension. On \(E_v\), put
\[
 h'_v:=\overline{\bigl(k'\bigr)}\,P'_v,
\]
where the bar denotes complex conjugation.
Define
\[
 E_{va}=\{x_{\rm full}\in E_v:h'_v(x_{\rm full})=\zeta_a\}.
\]
Since \((k')^p=1\),
\(\overline{k'}=(k')^{-1}=(k')^{p-1}\), we obtain that \(P'_v=k'h'_v\).
Outside \(E_v\), put \(h'_v=0\).
On \(E_v\), the function \(h'_v\) is a \(p\)-th root of unity.
Conditioning on every coordinate except \(s_v\), it is a fixed
\(p\)-th root of unity times \(\omega^{s_v}\), since \(k'\) is
independent of \(s_v\). As \(s_v\) runs uniformly through \(\R_p\),
each \(p\)-th root occurs exactly once. Thus each child has exactly
\(1/p\) of its parent's measure. The children partition \(E_v\), and,
because \(h'_v=\zeta_a\) on \(E_{va}\),
\[
 h'_v=\sum_{a=0}^{p-1}\zeta_a1_{E_{va}}.
\]

This step uses
\[
 n'=n+2M+\sum_{|v|=j}(2d_v-n+1)
\]
coordinates in total: the common chirp uses \(2M\), while the node
\(v\) uses \((d_v-n)+d_v+1=2d_v-n+1\) private coordinates. This
number is finite.
Previously defined children are unchanged. Drop the primes from the
updated objects and regard them as living on \(\R_p^{n'}\). Properties
\emph{(1)}--\emph{(4)} now hold with the polynomials constructed
through depth \(j\) and the sets constructed through depth \(j+1\).
This completes round \(j\).

After round \(N-1\), the polynomials have been constructed for every
\(v\in\T_N\), and the sets \(E_v\) are available through depth \(N\).
Set \(D\) to be the value of \(n\) reached after round \(N-1\), so
that all constructed objects live on \(\R_p^D\), as claimed.
Under the successive cylinder extensions, the initial root set
\(E_\varnothing=\R_p\) has become
\(\R_p\times\R_p^{D-1}=\R_p^D\), in agreement with property~\emph{(1)}.
\end{proof}

\begin{remark}\label{rem:important}
The common multiplier is essential: it preserves the pointwise
tree cancellation, whereas disjoint spectra ensure that coefficients
of different nodes never add. The integer \(C\) shifts in each
round, but the differences \(r(v)-r(w)\) never change. Hence new
coefficient levels can be inserted between old ones.
Equal modulus is required within each node, not across the entire
tree. No perturbation of equal coefficients is needed.

For this particular implementation, let \(n_0=1\) and let \(n_j\)
be the ambient dimension after adding the nodes of depth \(j\).
Since \(d_v\geq n_{j-1}\) and \(M\geq0\), the coordinate count gives
\[
 n_j\geq(1+p^j)n_{j-1},\qquad
 D=n_{N-1}\geq\prod_{j=1}^{N-1}(1+p^j)
       \geq p^{N(N-1)/2}.
\]
This describes the growth of the construction above. The construction
could very likely be optimized further, but we do not pursue this question here.
\end{remark}

\subsection{The tree order and a stopped random walk}
\label{sec:tree-order}

The construction of Lemma~\ref{lem:realize} accepts an \emph{arbitrary}
bijection \(r:\T_N\to\{1,\ldots,L\}\). We now fix a particular
bijection \(r_0\) such that reading the coefficients of the \(P_v\) in
decreasing order of magnitude also reads their imaginary parts in a
controlled order. This ordering property is used in Lemma~\ref{lem:order}
below.

Put \(b_p=(p-1)/2\), so that \(p=2b_p+1\) labels split into three
groups by the sign of \(\Impart\zeta_a\): \textit{positive }for
\(1\leq a\leq b_p\), \textit{zero} for \(a=0\), and \textit{negative }for
\(b_p+1\leq a\leq p-1\).

For \(v\in\T_N\), let \(\mathrm{Order}(v)\) denote the list of every
node in \(v\)'s subtree (within \(\T_N\)), defined recursively. At the
last generation, \(\mathrm{Order}(v)=[\,v\,]\) (that is, \(|v|=N-1\)).
Otherwise \(|v|<N-1\) and \(\mathrm{Order}(v)\) is the concatenation,
in the order listed, of
\[
 \begin{aligned}[t]
  &\mathrm{Order}(v1),\ \ldots,\ \mathrm{Order}(vb_p),\quad
   \mathrm{Order}(v0),\quad [\,v\,],\\
  &\mathrm{Order}\bigl(v(b_p{+}1)\bigr),\ \ldots,\ \mathrm{Order}(v(p{-}1)).
 \end{aligned}
\]
Applying this at the root gives a single list of all of \(\T_N\); let
\(r_0(v)\) be the position of \(v\) in that list.

Equivalently: order the \(p\) children before or after their parent
according to the sign of their label's imaginary part (positive and
zero before, negative after), and order siblings among themselves the
same way, recursively.

The purpose of this particular order $r_0$ is to turn a large sum of absolute imaginary
increments into a large partial sum.

\begin{lemma}\label{lem:order}
Fix a terminal word of length \(N\). Among its ancestor nodes,
ordered by \(r_0\), all strictly negative imaginary increments
precede all strictly positive imaginary increments.
Consequently, for any nonnegative numbers \(a_v\) and any point \(x\),
\begin{equation}\label{eq:variation}
 \max_{1\leq t\leq L}
 \left|\sum_{r_0(v)\leq t}a_vh_v(x)\right|
 \geq\frac13\sum_{v\in\T_N}a_v|\Impart h_v(x)|.
\end{equation}
\end{lemma}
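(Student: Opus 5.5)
The plan is to restrict attention to a single terminal cell and track only the ancestors of that cell. By Lemma~\ref{lem:realize}(1), the sets \(E_w\) with \(|w|=N\) partition \(\R_p^D\). So every point \(x\) lies in exactly one \(E_w\), where \(w=w_1\cdots w_N\) is a terminal word. Let \(v_j=w_1\cdots w_j\) for \(0\le j\le N-1\) be its ancestors in \(\T_N\).

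By Lemma~\ref{lem:realize}(2), \(h_v(x)=0\) unless \(x\in E_v\), that is, unless \(v\) is some \(v_j\). Moreover \(h_{v_j}(x)=\zeta_{w_{j+1}}\), since \(x\in E_{v_jw_{j+1}}\). Hence the increment contributed by \(v_j\) has imaginary part whose sign is that of \(\Impart\zeta_{w_{j+1}}\): positive for \(1\le w_{j+1}\le b_p\), zero for \(w_{j+1}=0\), and negative otherwise. Call \(v_j\) negative, zero or positive accordingly.

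\emph{Ordering claim.} Take two ancestors \(u=v_i\) and \(v=v_j\) with \(i<j\), and put \(c=w_{i+1}\). Then \(v\) lies in the subtree of \(uc\), so within \(\mathrm{Order}(u)\) it appears inside the block \(\mathrm{Order}(uc)\). By the recursive definition of \(r_0\), this block precedes \([u]\) when \(c\in\{0,1,\ldots,b_p\}\) and follows \([u]\) when \(c\in\{b_p+1,\ldots,p-1\}\). The sign type of \(u\) is precisely the sign type of \(c\). There are two cases.
\begin{itemize}
 \item If \(u\) is negative and \(v\) is positive, then \(c\) is negative, so \(u\) precedes \(v\).
 \item If \(u\) is positive and \(v\) is negative, then \(c\) is positive, so \(v\) precedes \(u\).
\end{itemize}
In either case the negative ancestor comes first, which proves the first assertion. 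The only care needed is that \(\mathrm{Order}\) of a subtree occupies a contiguous block, which is immediate from the recursion.

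\emph{Deriving \eqref{eq:variation}.} Put
\[
S_\pm=\sum_{\text{$v_j$ positive/negative}}a_{v_j}|\Impart h_{v_j}(x)|,
\qquad S=S_++S_-,
\]
so that \(S\) is the right-hand side of \eqref{eq:variation} without the factor \(\tfrac13\). Zero ancestors contribute nothing to imaginary parts, and non-ancestors contribute nothing at all. If there is at least one negative ancestor, let \(t_1\) be the largest \(r_0\)-rank among the negative ancestors. By the ordering claim, the partial sum up to \(t_1\) contains every negative ancestor and no positive one, so its imaginary part is \(-S_-\). The full sum (\(t=L\)) has imaginary part \(S_+-S_-\). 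The modulus of a complex number is at least the modulus of its imaginary part, so the maximum in \eqref{eq:variation} is at least \(\max(S_-,|S_+-S_-|)\); when there are no negative ancestors, \(S_-=0\) and the bound is simply \(S_+=S\). Finally, if \(S_-\ge S/3\) we are done. Otherwise \(S_+>2S/3\), and then \(S_+-S_->S/3\).

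I expect the ordering claim to be the only step requiring attention; the rest is bookkeeping.
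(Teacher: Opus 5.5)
Your proof is correct and follows the paper's argument essentially step for step: the nodes active at \(x\) are the ancestors on the terminal path, the two-case comparison turns on whether the block \(\mathrm{Order}(uc)\) sits before or after \([u]\), and the final bound comes from the partial sum ending at the last negative ancestor together with the full sum. Your case split on whether \(S_-\geq S/3\) is only a rephrasing of the paper's inequality \(U+V\leq 3\max\{U,|V-U|\}\).
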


\begin{proof}
For a fixed \(x\), the nodes \(v\) for which \(h_v(x)\ne0\) are
precisely the \(N\) ancestors on the unique terminal path determined
by \(x\), one at each depth \(0,\ldots,N-1\). The key point: if \(v\)
is one such ancestor, every ancestor strictly deeper than \(v\) lies
inside the child subtree that the path enters at \(v\), i.e., it lies
further along that same path. By the ordering rule of
Subsection~\ref{sec:tree-order}, this
subtree is listed \emph{before} \(v\) when the increment at \(v\) has
label \(1,\ldots,b_p\) (positive imaginary part), and \emph{after}
\(v\) when the label is \(b_p{+}1,\ldots,p{-}1\) (negative imaginary
part); the label \(0\) has zero imaginary part and never enters the
comparison below.

Now let \(v,u\) be any two ancestors on the path with
\(\Impart h_v(x)<0<\Impart h_u(x)\). If \(v\) is the shallower of the
two, then \(u\) lies in the subtree entered at \(v\). Since the
increment at \(v\) is negative, this subtree is listed \emph{after}
\(v\), so
\(r_0(u)>r_0(v)\). If instead \(u\) is the shallower node, \(v\) lies
in the subtree entered at \(u\). Since the increment at \(u\) is
positive, this subtree is listed \emph{before} \(u\), so
\(r_0(v)<r_0(u)\). Thus \(v\) precedes \(u\) in either case, proving
the first part of the lemma.

For a real sequence with all negative terms before all positive
terms, let the negative total be \(-U\) and the positive total
be \(V\), where \(U,V\geq0\). Its maximum absolute partial sum
is at least \(\max\{U,|V-U|\}\),
\[
 U+V=2U+(V-U)\leq 2U+|V-U|
       \leq3\max\{U,|V-U|\}.
\]
We apply this to the imaginary parts of the ordered summands, use the
elementary estimate \(|\Impart z|\leq |z|\), and 
\eqref{eq:variation} follows.
\end{proof}

By Lemma~\ref{lem:realize}, the terminal sets $E_v$ all have measure
\(p^{-N}\). Consequently the sequence \(Z_1,\ldots,Z_N\) of
increments along the path of a uniformly chosen point consists of
independent random variables, each uniform on
\(\{1,\omega,\ldots,\omega^{p-1}\}\). In particular,
\[
 \E Z_j=0,\qquad |Z_j|=1,\qquad
 S_\ell:=\sum_{j=1}^{\ell}Z_j,\qquad \E|S_N|^2=N.
\]

Fix \(\beta>1\). If \(v=(v_1,\ldots,v_j)\) has depth \(j\), define
\[
 S_\ell(v)=\sum_{r=1}^{\ell}\zeta_{v_r}
 \quad(0\leq\ell\leq j),\qquad S_0(v)=0,
\]
and put
\[
 a_v=
 \begin{cases}
  1,&\text{if }\max_{0\leq\ell\leq j}|S_\ell(v)|\leq\beta-1,\\
  0,&\text{otherwise}.
 \end{cases}
\]
Thus \(a_v\in\{0,1\}\) is a fixed scalar attached to the word \(v\),
not a function of \(x\). The sums \(S_\ell(v)\) are exactly the
preceding path sums at every point of \(E_v\).

The following stopping observation explains why these coefficients keep
the full polynomial uniformly bounded.

\begin{lemma}\label{lem:stopping}
With the coefficients \(a_v\) defined above, let
\[
 \tau=\min\{\ell\in\{1,\ldots,N\}:|S_\ell|>\beta-1\},
 \qquad \min\varnothing=\infty,
\]
at each point of the underlying product group. Then
\[
 \sum_{v\in\T_N}a_vh_v=S_{\tau\wedge N}
\]
and, at every point,
\begin{equation}
 \left|\sum_{v\in\T_N}a_vh_v\right|\leq \beta.
\end{equation}
\end{lemma}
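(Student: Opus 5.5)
Fix a point \(x\) of the product group and work along its terminal path. By Lemma~\ref{lem:realize}(1)--(2), the nodes \(v\) with \(h_v(x)\neq0\) are exactly the ancestors \(v^{(0)},v^{(1)},\ldots,v^{(N-1)}\) of the terminal word determined by \(x\), with \(|v^{(j)}|=j\). Moreover \(h_{v^{(j)}}(x)=\zeta_{a}\), where \(a\) is the label of the child \(v^{(j+1)}\); this value is the increment \(Z_{j+1}\). Hence
\[
 \sum_{v\in\T_N}a_vh_v(x)=\sum_{j=0}^{N-1}a_{v^{(j)}}Z_{j+1}.
\]
The plan is to identify which coefficients \(a_{v^{(j)}}\) equal one. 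The sums \(S_\ell(v^{(j)})\), \(0\le\ell\le j\), are the path sums \(S_\ell\) at \(x\). Therefore \(a_{v^{(j)}}=1\) exactly when \(\max_{0\le\ell\le j}|S_\ell|\le\beta-1\). Since \(S_0=0\le\beta-1\), this condition holds precisely when \(\tau>j\), that is, when \(j<\tau\).

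Next I sum the resulting series. The indices \(j\) with \(a_{v^{(j)}}=1\) form the initial segment \(0\le j\le (\tau\wedge N)-1\). When \(\tau=\infty\), this segment is all of \(0,\ldots,N-1\). The sum therefore telescopes to
\[
 \sum_{j=0}^{(\tau\wedge N)-1}Z_{j+1}=S_{\tau\wedge N},
\]
which proves the identity.

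For the bound I split into two cases. If \(\tau=\infty\), then \(|S_N|\le\beta-1<\beta\) directly. If \(\tau\le N\), then \(|S_{\tau-1}|\le\beta-1\) by the minimality of \(\tau\); this also covers \(\tau=1\), since \(S_0=0\). Because \(|Z_\tau|=1\), we get \(|S_\tau|\le|S_{\tau-1}|+1\le\beta\).

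The only delicate point is the bookkeeping that links \(h_{v^{(j)}}(x)\) to the increment \(Z_{j+1}\) rather than \(Z_j\), and that links \(a_v\), which depends on sums up to depth \(j\), to the event \(\{\tau>j\}\). Once this indexing is checked against the definition \(h_v=\sum_a\zeta_a1_{E_{va}}\), the remainder is immediate.
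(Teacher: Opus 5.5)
Your proof is correct and follows the paper's argument essentially step for step. You identify \(h_{v^{(j)}}(x)=Z_{j+1}\) along the path and show \(a_{v^{(j)}}=1\) iff \(j<\tau\); your remark that \(S_0=0\le\beta-1\) correctly reconciles the \(\ell=0\) term in \(a_v\) with \(\tau\) ranging over \(\{1,\ldots,N\}\). You then sum the initial segment to get \(S_{\tau\wedge N}\) and bound \(|S_\tau|\le|S_{\tau-1}|+1\le\beta\) by minimality of \(\tau\). One wording quibble: the final sum does not telescope; it is simply the definition of \(S_{\tau\wedge N}\).
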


\begin{proof}
Fix a point \(x\), and let \(v_0=\varnothing,v_1,\ldots,v_{N-1}\) be
its ancestors, so that \(h_{v_j}=Z_{j+1}\) at \(x\). Because \(v_j\)'s
own labels are exactly \(x\)'s first \(j\) increments,
\(S_\ell(v_j)=S_\ell\) at \(x\) for every \(\ell\leq j\), and hence
\[
 a_{v_j}=1
 \iff \max_{0\leq \ell\leq j}|S_\ell|\leq\beta-1
 \iff j<\tau.
\]
Consequently
\[
 \sum_{v\in\T_N}a_vh_v
 =\sum_{j=0}^{N-1}a_{v_j}Z_{j+1}
 =\sum_{j=0}^{N-1}1_{\{j<\tau\}}Z_{j+1}
 =\sum_{k=1}^{\tau\wedge N}Z_k
 =S_{\tau\wedge N},
\]
which is the claimed identity, evaluated at \(x\).

For the bound: if \(\tau>N\), no partial sum through step \(N\) ever
exceeded \(\beta-1\), so \(|S_{\tau\wedge N}|=|S_N|\leq\beta-1\). If
\(\tau\leq N\), minimality of \(\tau\) gives \(|S_{\tau-1}|\leq\beta-1\)
(with \(S_0=0\)), while \(|Z_\tau|=1\); the triangle inequality gives
\[
 |S_{\tau\wedge N}|=|S_\tau|
 \leq|S_{\tau-1}|+|Z_\tau|
 \leq(\beta-1)+1=\beta.
\]
Either way \(|S_{\tau\wedge N}|\leq\beta\), at every point \(x\).
\end{proof}

We also need to know that the stopping rule discards only a small set
of paths. The next estimate is the required maximal inequality.

\begin{lemma}\label{lem:crossing}
For the random walk above,
\begin{equation}
 \Prob\{\max_{0\leq\ell\leq N}|S_\ell|>\beta-1\}
 \leq\frac{N}{(\beta-1)^2}.
\end{equation}
\end{lemma}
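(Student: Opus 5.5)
This is Kolmogorov's maximal inequality applied to the complex martingale \(S_\ell\); the only care needed is that the martingale is complex-valued. The increments \(Z_j\) are independent, satisfy \(\E Z_j=0\) and \(|Z_j|=1\), and so \(\E|S_N|^2=N\) by orthogonality. Two routes are available, and I would take the first.

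The first route treats real and imaginary parts separately. The walk \(S_\ell\) is a complex martingale with respect to the natural filtration, since \(\E(Z_{\ell+1}\mid Z_1,\ldots,Z_\ell)=\E Z_{\ell+1}=0\) by independence. I would therefore apply Kolmogorov's maximal inequality to the real martingales \(\Repart S_\ell\) and \(\Impart S_\ell\) separately. This gives only \(N/(\beta-1)^2\) times a factor, because \(|S_\ell|>\lambda\) forces one of \(|\Repart S_\ell|\), \(|\Impart S_\ell|\) to exceed \(\lambda/\sqrt2\). That is wasteful, so instead I would reprove Kolmogorov's inequality directly for the modulus, as follows.

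Put \(\lambda=\beta-1>0\). Let \(A_k\) be the event that \(k\) is the first index with \(|S_k|>\lambda\), so the event in question is the disjoint union of the \(A_k\) for \(1\leq k\leq N\); the index \(0\) is excluded because \(S_0=0\). On \(A_k\) write \(S_N=S_k+(S_N-S_k)\). The increment \(S_N-S_k\) is independent of \((Z_1,\ldots,Z_k)\) and has mean zero, while \(1_{A_k}S_k\) is a function of \(Z_1,\ldots,Z_k\). Hence the cross term \(\E\bigl[1_{A_k}\overline{S_k}(S_N-S_k)\bigr]\) vanishes, and its conjugate vanishes too. It follows that
\[
\E\bigl[1_{A_k}|S_N|^2\bigr]\geq\E\bigl[1_{A_k}|S_k|^2\bigr]\geq\lambda^2\Prob(A_k).
\]
Summing over \(k\) gives
\[
\lambda^2\,\Prob\Bigl\{\max_{\ell\leq N}|S_\ell|>\lambda\Bigr\}\leq\E|S_N|^2=N,
\]
which is the claim. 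The only step needing attention is the vanishing of the cross term in the complex setting. It follows from independence and \(\E Z_j=0\), both available from the measure computation preceding Lemma~\ref{lem:stopping}, where the \(E_v\) of equal measure make the \(Z_j\) i.i.d.\ uniform on the \(p\)-th roots of unity.

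The second route is a quick alternative that avoids independence arguments: apply Doob's \(L^2\) maximal inequality to the nonnegative submartingale \(|S_\ell|\). It yields the same bound.
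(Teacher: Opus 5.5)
Your main argument is correct and is essentially the paper's own direct proof: the paper uses the same first-passage events \(A_k=\{\tau=k\}\), the same vanishing cross term (stated there as \(\E(|S_N|^2\mid Z_1,\ldots,Z_k)=|S_k|^2+N-k\)), and the same summation over \(k\); its primary citation is Doob's maximal inequality for the submartingale \(|S_\ell|^2\). One small caution on your second route: the weak-type Doob inequality applied to \(|S_\ell|^2\) gives exactly \(N/(\beta-1)^2\), whereas the strong \(L^2\) inequality \(\E\max_\ell|S_\ell|^2\leq4\,\E|S_N|^2\) would lose a factor of \(4\).
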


\begin{proof}
With respect to the filtration generated by \(Z_1,\ldots,Z_\ell\),
\(|S_\ell|^2\) is a nonnegative submartingale. Doob's maximal
inequality gives
the asserted estimate since \(\E|S_N|^2=N\). For completeness, it
also follows directly from the stopping time in
Lemma~\ref{lem:stopping}. Independence
and zero means of future increments give
\(\E(|S_N|^2\mid Z_1,\ldots,Z_\ell)=|S_\ell|^2+N-\ell\).
Multiply by \(1_{\{\tau=\ell\}}\), take expectations, and sum over
\(1\leq\ell\leq N\). Nonnegativity then yields
\((\beta-1)^2\Prob\{\tau\leq N\}\leq\E|S_N|^2\), as required.
\end{proof}

\Needspace{6\baselineskip}
To apply Lemma~\ref{lem:order} on a set of large measure, we need a lower
bound for the accumulated absolute imaginary increments.

\begin{lemma}\label{lem:moments}
If \(Z\) is uniform on the \(p\)-th roots of unity and
\(W=|\Impart Z|\), then
\[
 \E W^2=\frac12,\qquad \E W\geq\frac12,\qquad
 \operatorname{Var}(W)\leq\frac14.
\]
Consequently, for independent copies \(W_1,\ldots,W_N\),
\begin{equation}\label{eq:concentration}
 \Prob\left\{\sum_{j=1}^N W_j<N/4\right\}\leq\frac4N.
\end{equation}
All these bounds are independent of \(p\).
\end{lemma}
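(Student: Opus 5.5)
We work directly with the explicit distribution of \(Z=\omega^A\), where \(A\) is uniform on \(\R_p\). Then \(\Impart Z=\sin(2\pi A/p)\) and \(W^2=\sin^2(2\pi A/p)=\tfrac12\bigl(1-\cos(4\pi A/p)\bigr)\). The key observation is that \(\sum_{a=0}^{p-1}\omega^{2a}=0\), since \(\omega^2\ne1\) when \(p\) is odd: \(2\) is invertible modulo \(p\), so \(\omega^2\) is again a primitive \(p\)-th root of unity. Taking real parts gives \(\sum_a\cos(4\pi a/p)=0\), and hence \(\E W^2=\tfrac12\) exactly. This is the only place where oddness of \(p\) enters. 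For \(p=2\) the computation would fail, since then \(W\equiv0\).

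For the first moment we use \(0\leq W\leq1\), so that \(W\geq W^2\) pointwise and \(\E W\geq\E W^2=\tfrac12\). The variance bound follows in the same way:
\[
 \operatorname{Var}(W)=\E W^2-(\E W)^2\leq\tfrac12-\tfrac14=\tfrac14 .
\]
None of these estimates depends on \(p\).

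For the concentration estimate \eqref{eq:concentration}, set \(S=\sum_{j=1}^N W_j\). By the moment bounds, \(\E S\geq N/2\), and by independence \(\operatorname{Var}(S)\leq N/4\). The event \(\{S<N/4\}\) is contained in \(\{|S-\E S|>N/4\}\), because on it we have \(\E S-S>N/2-N/4\). Chebyshev's inequality then gives
\[
 \Prob\{S<N/4\}\leq\frac{N/4}{(N/4)^2}=\frac4N .
\]

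There is no real obstacle in this argument. The only step that needs care is the vanishing of the trigonometric sum, that is, checking that \(\omega^2\) is a nontrivial root of unity, which holds precisely because \(p\) is odd.
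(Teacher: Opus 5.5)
Your proof is correct and follows essentially the same route as the paper. The vanishing of \(\sum_a\omega^{2a}\) for odd \(p\) gives \(\E W^2=\tfrac12\), the bound \(0\le W\le1\) gives the first-moment and variance estimates, and Chebyshev at deviation \(N/4\) gives the tail bound (the paper justifies the vanishing sum by noting that multiplication by \(2\) permutes \(\mathbb Z/p\mathbb Z\), while you use \(\omega^2\neq1\), an immaterial difference).
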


\begin{proof}
Since \(p\) is odd, multiplication by \(2\) permutes
\(\mathbb Z/p\mathbb Z\), including when \(p\) is composite. Thus
\[
 \E Z^2=\frac1p\sum_{a=0}^{p-1}\omega^{2a}
        =\frac1p\sum_{a=0}^{p-1}\omega^a=0.
\]
Hence
\[
 \E W^2=\E(\Impart Z)^2
       =\frac{1-\Repart(\E Z^2)}2=\frac12.
\]
As \(0\leq W\leq1\), we have \(\E W\geq\E W^2=1/2\), and
\(\operatorname{Var}(W)=1/2-(\E W)^2\leq1/4\).
The sum of \(N\) independent copies has mean at least \(N/2\)
and variance at most \(N/4\). Chebyshev's inequality at deviation
\(N/4\) proves~\eqref{eq:concentration}.
\end{proof}

On the event with no crossing, every coefficient \(a_v\) belonging
to the path of the point equals one. Nodes off that path do not
contribute, since their \(h_v\) vanish at the point. If also
\(\sum_{j=1}^N|\Impart Z_j|\geq N/4\), then~\eqref{eq:variation} gives
\begin{equation}\label{eq:largesum}
 \max_{1\leq t\leq L}
 \left|\sum_{r_0(v)\leq t}a_vh_v\right|\geq N/12.
\end{equation}
By Lemmas~\ref{lem:crossing} and \ref{lem:moments}, the exceptional
set has measure at most \(N/(\beta-1)^2+4/N\).

\subsection{Exact threshold selection}

\begin{proof}[Proof of Theorem~\ref{thm:finite}]
Choose an integer
\begin{equation}\label{eq:Nchoice}
 N>\max\{8/\varepsilon,\;1152A^2/\varepsilon\}.
\end{equation}
Apply Lemma~\ref{lem:realize} with \(r=r_0\), and set
\[
 \beta=1+\sqrt{2N/\varepsilon},
 \qquad P=\beta^{-1}\sum_{v\in\T_N}a_vP_v.
\]
Since \(P_v=kh_v\) with \(|k|=1\),
Lemma~\ref{lem:stopping} gives \(\|P\|_\infty\leq1\).
Moreover, every \(P_v\) has zero Fourier coefficient at the identity,
so \(\widehat P(0)=0\).

Disjointness of the spectra and \(a_v\in\{0,1\}\) show that
each surviving coefficient from node \(v\) has modulus
\(\beta^{-1}p^{-(C+r_0(v))}\). Thus for \(1\leq t\leq L\),
\begin{equation}\label{eq:threshold}
 T_{\beta^{-1}p^{-(C+t)}}P
   =\beta^{-1}\sum_{r_0(v)\leq t}a_vP_v
   =\beta^{-1}k\sum_{r_0(v)\leq t}a_vh_v.
\end{equation}
This is an exact identity, including any ties within a node.
There are no ties between different active nodes.
If the node of rank \(t\) is inactive, its nominal coefficient level
does not occur in \(P\); the identity remains valid because its
contribution on the right is zero.

The exceptional measure in~\eqref{eq:largesum} is at most
\[
 \frac{N}{(\beta-1)^2}+\frac4N
   =\frac{\varepsilon}{2}+\frac4N<\varepsilon.
\]
At every other point,~\eqref{eq:threshold} yields
\[
 T^*P\geq\frac{N}{12(1+\sqrt{2N/\varepsilon})}
       \geq\frac{N}{24\sqrt{2N/\varepsilon}}
       =\frac1{24}\sqrt{\frac{\varepsilon N}{2}}
       =\frac{\sqrt{\varepsilon N}}{24\sqrt2}>A.
\]
Here \(\sqrt{2N/\varepsilon}\geq1\), and the last inequality
is precisely the second requirement in~\eqref{eq:Nchoice}, since
\(1152=(24\sqrt2)^2\).
\end{proof}

\section{Finite models and reduction to compact abelian groups}\label{sec:finite-models}

\subsection{The finite property and exact transfer}\label{sec:transfer}

For a character polynomial \(P\), let
\(T^*_GP=\max_{\delta>0}|T_\delta^GP|\).
If \(\lambda_1>\cdots>\lambda_r>0\) are the distinct nonzero
coefficient magnitudes of \(P\), then
\[
 T^*_GP=\max\bigl(\{0\}\cup
           \{|T_{\lambda_j}^GP|:1\leq j\leq r\}\bigr).
\]
Indeed, every nonempty set of frequencies selected by a threshold is
also selected at one of these coefficient levels. Thus the maximum is
attained and \(T^*_GP\) is continuous; when \(P=0\), the list is empty
and \(T^*_GP=0\). Consequently, if \(T^*_GP(x)>A\geq0\), then
\(|T_{\lambda_j}^GP(x)|>A\) at some nonzero coefficient level
\(\lambda_j\).

\begin{definition}
We say that \(\B(G)\) holds if, for every \(A>0\) and
\(0<\varepsilon<1\), there is a character polynomial \(P\) with
\[
 \|P\|_\infty\leq1,\qquad
 \mu_G\{T^*_GP>A\}>1-\varepsilon.
\]
\end{definition}

\Needspace{6\baselineskip}
To use examples constructed on simpler groups, we need to transfer them
through quotient maps without changing their threshold sums.

\begin{lemma}\label{main:quotient}
Let \(q:G\to H\) be a continuous surjective homomorphism of compact
Hausdorff abelian groups. For \(f\in C(H)\),
\[
 \widehat{f\circ q}(\gamma)=
 \begin{cases}
 \widehat f(\lambda),&\gamma=\lambda\circ q,\quad\lambda\in\widehat H,\\
 0,&\gamma\text{ is nontrivial on }\ker q.
 \end{cases}
\]
These alternatives exhaust \(\widehat G\), and the descending
character is unique. Consequently
\[
 T_\delta^G(f\circ q)=(T_\delta^Hf)\circ q.
\]
Property \(\B(H)\) implies \(\B(G)\), and \(f\in\D(H)\) implies
\(f\circ q\in\D(G)\).
\end{lemma}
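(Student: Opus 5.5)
The plan is to compute the Fourier coefficients of \(f\circ q\) using the pushforward of Haar measure. The key fact is that \(q_*\mu_G=\mu_H\): since \(q\) is a continuous surjective homomorphism, \(q_*\mu_G\) is a regular Borel probability measure on \(H\). It is also translation invariant, because every \(h\in H\) equals \(q(g)\) for some \(g\), and translating by \(h\) on \(H\) corresponds to translating by \(g\) on \(G\). By uniqueness of Haar measure, \(q_*\mu_G=\mu_H\). Hence for \(\gamma=\lambda\circ q\) we get
\[
\widehat{f\circ q}(\gamma)=\int_G f(q(x))\overline{\lambda(q(x))}\,d\mu_G(x)=\int_H f\overline\lambda\,d\mu_H=\widehat f(\lambda).
\]
If instead \(\gamma(k_0)\neq1\) for some \(k_0\in\ker q\), I translate the integral by \(k_0\). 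This leaves \(f\circ q\) unchanged and multiplies the integral by \(\overline{\gamma(k_0)}\), so the coefficient is zero.

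Next I check that the two cases exhaust \(\widehat G\) and that the descending character is unique. A character trivial on \(\ker q\) factors as \(\lambda\circ q\) for some function \(\lambda\) on \(H\). This \(\lambda\) is a homomorphism, and it is continuous because \(q\) is a closed continuous surjection between compact Hausdorff spaces, hence a quotient map; so \(\lambda\in\widehat H\). Uniqueness of \(\lambda\) follows from surjectivity of \(q\). The map \(\lambda\mapsto\lambda\circ q\) is therefore an injection \(\widehat H\to\widehat G\), and the coefficients of \(f\circ q\) on its image are exactly those of \(f\). Summing over frequencies with coefficient magnitude at least \(\delta\) gives
\[
T^G_\delta(f\circ q)=\sum_{|\widehat f(\lambda)|\ge\delta}\widehat f(\lambda)\,\lambda\circ q=(T^H_\delta f)\circ q.
\]
The sums are finite by Lemma~\ref{main:thresholds}.

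For the transfer of \(\B\), take \(P\) from \(\B(H)\). Then \(P\circ q\) is a character polynomial with \(\|P\circ q\|_\infty=\|P\|_\infty\le1\), and \(T^*_G(P\circ q)=(T^*_HP)\circ q\). So
\[
\mu_G\{T^*_G(P\circ q)>A\}=\mu_H\{T^*_HP>A\}>1-\varepsilon,
\]
again by \(q_*\mu_G=\mu_H\). For \(\D\), note that \(f\circ q\) is continuous and
\[
\limsup_{\delta\downarrow0}|T^G_\delta(f\circ q)(x)|=\limsup_{\delta\downarrow0}|T^H_\delta f(q(x))|.
\]
So the divergence set in \(G\) contains \(q^{-1}\) of the divergence set \(X\) in \(H\). \(X\) has full \(\mu_H\)-measure, possibly after completion, so it contains a Borel set \(B\) of full measure. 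Then \(q^{-1}(B)\) is Borel with \(\mu_G(q^{-1}(B))=\mu_H(B)=1\).

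I expect the main obstacle to be the identification \(q_*\mu_G=\mu_H\) without metrizability. Regularity of the pushforward needs care, and I would handle it through the Riesz representation theorem: \(\phi\mapsto\int_G\phi\circ q\,d\mu_G\) is a translation-invariant positive normalized functional on \(C(H)\), so it equals \(\int\phi\,d\mu_H\). Equality of the measures on Borel sets then follows by regularity, approximating compact sets from outside by Urysohn functions. That same functional identity is all that the coefficient computation uses. The continuity of \(\lambda\) via the quotient-map property is the other point needing a word.
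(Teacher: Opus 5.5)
Your proof is correct and follows the paper's argument essentially step for step. Both identify $q_*\mu_G=\mu_H$ by uniqueness of Haar measure and descend kernel-trivial characters through the quotient; the paper gets continuity of $\lambda$ from the homeomorphism $G/\ker q\to H$, you from $q$ being a closed quotient map. Both kill the remaining coefficients by translating by a kernel element and transfer $\B$ and $\D$ through the pushforward, and your extra care about regularity of the pushforward and about completion only makes explicit what the paper leaves implicit.
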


\begin{proof}
Let \(N=\ker q\). The pushforward \(q_*\mu_G\) is a
translation-invariant probability measure on \(H\), and hence
\[
 q_*\mu_G=\mu_H.
\]

Suppose first that \(\gamma\in\widehat G\) is trivial on \(N\). Then
\[
 \lambda(q(x)):=\gamma(x)
\]
defines a character \(\lambda\in\widehat H\). It is well defined and
continuous because the induced continuous bijection \(G/N\to H\) is a
homeomorphism. The character \(\lambda\) is unique because \(q\) is
surjective. Using the pushforward identity gives
\[
 \widehat{f\circ q}(\gamma)
 =\int_G f(q(x))\overline{\lambda(q(x))}\,d\mu_G(x)
 =\int_H f(z)\overline{\lambda(z)}\,d\mu_H(z)
 =\widehat f(\lambda).
\]

If instead \(\gamma\) is nontrivial on \(N\), choose \(y\in N\) with
\(\gamma(y)\neq1\), and put
\[
I=\widehat{f\circ q}(\gamma).
\]
Translation invariance and \(q(y)=e_H\) give
\[
 I=\overline{\gamma(y)}I,
\]
so \(I=0\). Thus
\[
 \widehat{f\circ q}(\gamma)
 =
 \begin{cases}
  \widehat f(\lambda),
   & \gamma=\lambda\circ q
     \text{ for a unique }\lambda\in\widehat H,\\
  0,
   & \gamma\text{ is nontrivial on }N.
 \end{cases}
\]

There is therefore no change in either the values or the magnitudes of
the nonzero Fourier coefficients under pullback, and for every
\(\delta>0\),
\[
 T_\delta^G(f\circ q)(x)
 =
 \sum_{\substack{\lambda\in\widehat H\\
                  |\widehat f(\lambda)|\ge\delta}}
 \widehat f(\lambda)(\lambda\circ q)(x)
 =T_\delta^Hf(q(x)).
\]
Hence
\[
T_\delta^G(f\circ q)=(T_\delta^Hf)\circ q.
\]

Finally, surjectivity gives
\[
 \|f\circ q\|_\infty=\|f\|_\infty,
\]
while \(q_*\mu_G=\mu_H\) preserves the measures of threshold
superlevel sets and divergence sets. Consequently, \(\B(H)\) implies
\(\B(G)\), and \(f\in\D(H)\) implies \(f\circ q\in\D(G)\).
\end{proof}

\begin{lemma}\label{main:infinitedual}
If \(G\) is infinite, then \(\widehat G\) is infinite.
\end{lemma}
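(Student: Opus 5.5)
We argue by contraposition: assuming \(\widehat G\) is finite, we show that \(G\) is finite. The tools are orthonormality of characters in \(L^2(G)\) and their completeness, meaning that the linear span of \(\widehat G\) is dense in \(C(G)\) by Stone--Weierstrass, or equivalently by Peter--Weyl. Both facts are standard (see \cite{Rudin}) and are already used in Lemma~\ref{main:thresholds}.

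First, suppose \(\widehat G=\{\gamma_1,\ldots,\gamma_m\}\) is finite. Then the span \(V\) of the characters is a finite-dimensional subspace of \(C(G)\). It is therefore closed in the uniform norm. Since \(V\) is also dense, we get \(C(G)=V\), and hence \(\dim C(G)\leq m\).

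Second, we conclude using the Hausdorff property. A compact Hausdorff space is normal, so Urysohn's lemma applies. If \(G\) had \(m+1\) distinct points \(x_0,\ldots,x_m\), we could find continuous functions \(\phi_i\) with \(\phi_i(x_j)=\delta_{ij}\). These functions are linearly independent, so \(\dim C(G)\geq m+1\), a contradiction. Hence \(G\) has at most \(m\) points, and in particular \(G\) is finite.

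An alternative route avoids Urysohn's lemma: the characters separate the points of \(G\), so the map \(x\mapsto(\gamma_1(x),\ldots,\gamma_m(x))\) is an injective continuous homomorphism into \(\mathbb T^m\). This alone does not give finiteness, however, so the dimension argument above is the one we would use. The only substantive ingredient, and thus the main point to cite carefully, is the density of the span of characters in \(C(G)\) for general non-metrizable compact abelian \(G\). This is the Peter--Weyl/Gelfand--Raikov theorem, which we simply invoke; everything else is routine.
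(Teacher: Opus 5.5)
Your proof is correct, but it takes a different route from the paper's. You use point separation in its strong form, namely density of character polynomials in \(C(G)\) via Stone--Weierstrass. You then add closedness of finite-dimensional subspaces and Urysohn's lemma to get \(|G|\leq\dim C(G)\leq|\widehat G|\).

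The paper instead uses only point separation plus one algebraic observation, and this is exactly the route you dismissed. If \(|\widehat G|=m\), Lagrange's theorem gives \(\gamma^m=1\) for every \(\gamma\in\widehat G\). Hence each image \(\gamma(G)\) lies in the finite group of \(m\)-th roots of unity. Your injective map \(x\mapsto(\gamma_1(x),\ldots,\gamma_m(x))\) therefore lands in a finite subset of \(\mathbb T^m\), which gives \(|G|\leq m^m\). So the injection into \(\mathbb T^m\) does prove finiteness once you notice that finite order of each character forces a finite image.

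The paper's argument is shorter and needs no analysis beyond the separation theorem. Yours pays for Stone--Weierstrass and Urysohn but gets the sharp bound \(|G|\leq|\widehat G|\). You can keep that sharp bound and drop both extra tools. The span \(V\) of the characters is an algebra containing the constants, so for distinct points \(x_0,\ldots,x_m\) and characters \(\gamma_{ij}\) with \(\gamma_{ij}(x_i)\neq\gamma_{ij}(x_j)\), the functions \(\phi_i=\prod_{j\neq i}\bigl(\gamma_{ij}-\gamma_{ij}(x_j)\bigr)/\bigl(\gamma_{ij}(x_i)-\gamma_{ij}(x_j)\bigr)\) already lie in \(V\) and satisfy \(\phi_i(x_k)=\delta_{ik}\). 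This forces \(\dim V\geq m+1\) directly.

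Both proofs rest on the same nontrivial input: characters of a compact abelian group separate points.
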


\begin{proof}
If \(\widehat G\) were finite, every character would have finite
order and hence finite image. Character separation makes the map
\[
 G\longrightarrow\prod_{\gamma\in\widehat G}\gamma(G),
 \qquad x\longmapsto(\gamma(x))_{\gamma\in\widehat G}
\]
injective. Its target would be finite, contradicting infinitude of \(G\).
\end{proof}

\Needspace{6\baselineskip}
When assembling the finite examples, we will need disjoint spectra.
Multiplication by a character provides this separation without changing the
size of the threshold sums.

\begin{lemma}\label{main:modulation}
For \(\eta\in\Gamma\) and \(f\in C(G)\),
\[
 \widehat{\eta f}(\gamma)=\widehat f(\eta^{-1}\gamma),
 \qquad T_\delta(\eta f)=\eta T_\delta f.
\]
If \(\Gamma\) is infinite and \(S,F\subset\Gamma\) are finite,
some \(\eta\in\Gamma\) satisfies \(\eta S\cap F=\varnothing\).
Modulation preserves uniform norms and threshold-sum magnitudes.
\end{lemma}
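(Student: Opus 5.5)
The proof splits into the three assertions of Lemma~\ref{main:modulation}, each of which is short.

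\emph{Coefficient identity.} For \(\eta\in\Gamma\) and \(\gamma\in\Gamma\) we have \(\overline{\gamma}\,\eta=\overline{\eta^{-1}\gamma}\), because characters are unimodular and so \(\overline{\eta^{-1}}=\eta\). Substituting this into the defining integral gives
\[
 \widehat{\eta f}(\gamma)=\int_G f(x)\eta(x)\overline{\gamma(x)}\,d\mu_G(x)
 =\int_G f(x)\overline{(\eta^{-1}\gamma)(x)}\,d\mu_G(x)
 =\widehat f(\eta^{-1}\gamma).
\]
Next, reindex the threshold sum by \(\lambda=\eta^{-1}\gamma\), which is a bijection of \(\Gamma\). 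The selection condition \(|\widehat{\eta f}(\gamma)|\ge\delta\) becomes \(|\widehat f(\lambda)|\ge\delta\). The summand becomes \(\widehat f(\lambda)(\eta\lambda)(x)=\eta(x)\,\widehat f(\lambda)\lambda(x)\). Summing this finite sum gives \(T_\delta(\eta f)=\eta\,T_\delta f\).

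\emph{Norm and magnitude preservation.} Since \(|\eta|\equiv1\), we get \(\|\eta f\|_\infty=\|f\|_\infty\) and \(|T_\delta(\eta f)|=|T_\delta f|\) pointwise for every \(\delta>0\). In particular the maximal threshold functions coincide pointwise.

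\emph{Existence of \(\eta\).} We need \(\eta\) with \(\eta s\notin F\) for all \(s\in S\). This fails exactly when \(\eta\in F S^{-1}=\{f s^{-1}:f\in F,\ s\in S\}\). That set has at most \(|F|\,|S|\) elements, so it is finite. Since \(\Gamma\) is infinite, it contains some \(\eta\) outside \(FS^{-1}\), and that \(\eta\) satisfies \(\eta S\cap F=\varnothing\).

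No step here is a genuine obstacle. The only point needing care is the reindexing: the bijection \(\gamma\mapsto\eta^{-1}\gamma\) must carry the selected frequency set onto the corresponding selected set for \(f\). This holds because the magnitude condition is transported exactly, so ties and the finiteness given by Lemma~\ref{main:thresholds} cause no difficulty.
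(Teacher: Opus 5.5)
Your proof is correct and follows the paper's argument exactly: the coefficient identity comes from the defining integral using \(\overline{\eta^{-1}}=\eta\), the threshold identity from reindexing by \(\lambda=\eta^{-1}\gamma\), the choice of \(\eta\) from the finiteness of the forbidden set \(FS^{-1}\) in the infinite group \(\Gamma\), and norm and magnitude preservation from \(|\eta|=1\). One cosmetic point: you use \(f\) both for the function and for the elements of \(F\) in \(FS^{-1}\), so rename the latter.
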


\begin{proof}
The first identity follows from the coefficient integral and
the second by reindexing. The only forbidden multipliers are
in the finite set \(FS^{-1}\). Finally, \(|\eta|=1\).
\end{proof}

We shall also use
\begin{equation}\label{main:scaling}
 T_\delta(cf)=cT_{\delta/c}f\quad(c>0),\qquad
 |\widehat f(\gamma)|\leq\|f\|_\infty.
\end{equation}
General unimodular multiplication does not have the threshold
covariance of character modulation.

\subsection{The three finite model inputs}

Let \(\mathbb T\) denote the unit circle with Haar probability measure
\(\mu_{\mathbb T}\), and put \(K_p=\prod_{j\geq1}(\mathbb Z/p\mathbb Z)\).
The latter always has its compact product topology and Haar measure.
In Propositions~\ref{main:circle} and~\ref{main:walsh}, we adapt
K\"orner's results for the trigonometric system \cite{K99} and the
Walsh system \cite{K06} to the present setting.
Whenever an imported result supplies a non-strict lower bound,
we choose its parameters so that this bound is strictly larger
than the target \(A\) in \(\B(G)\).

\begin{proposition}\label{main:circle}
For every \(s>0\) and \(K>1\), there are a trigonometric polynomial
\(P\) and a measurable set \(E\subseteq\mathbb T\) such that
\[
 \|P\|_\infty\leq s,\qquad \mu_{\mathbb T}(E)\leq s,\qquad
 T^*_{\mathbb T}P>K\quad\text{on }\mathbb T\setminus E.
\]
In particular, \(\B(\mathbb T)\) holds.
\end{proposition}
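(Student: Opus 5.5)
The plan is to import Körner's finite lemma from \cite{K99} and then translate it into the present threshold notation. That lemma, the finite step behind his continuous example, says the following. For every \(s>0\) and \(K>1\) there are a trigonometric polynomial \(P\) and a set \(E\) with \(\|P\|_\infty\le s\) and \(|E|\le s\). Off \(E\), some partial sum of the Fourier series of \(P\), taken in decreasing order of coefficient magnitude, exceeds \(K\) in modulus.

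The bookkeeping has three steps.

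\emph{Normalization.} Körner works on \([0,2\pi)\) or \(\mathbb T\) with a measure normalized differently; rescale so that \(\mu_{\mathbb T}\) is the probability Haar measure. The exceptional set is then measured correctly once we apply the lemma with \(s\) replaced by a smaller constant multiple of itself.

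\emph{Ties.} Körner's statement concerns partial sums of a decreasing rearrangement, and within a level of equal magnitudes the ordering could matter. I would handle this in one of two ways. If his construction has distinct moduli, or the large partial sums occur at the ends of complete magnitude levels, then each such partial sum is exactly \(T_{\lambda_j}P\) for some coefficient level \(\lambda_j\), as in Subsection~\ref{sec:transfer}. Otherwise, perturb the coefficients of \(P\) slightly so that all nonzero moduli become distinct while the chosen ordering is preserved. The perturbation is a polynomial with finite spectrum and tiny coefficients, so it changes \(\|P\|_\infty\) and every partial sum by an arbitrarily small uniform amount. Applying the lemma with \(K+1\) and \(s/2\) then absorbs the loss.

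\emph{Strictness.} Take the imported lower bound to be \(K+1>K\), so a non-strict inequality there still gives \(T^*_{\mathbb T}P>K\) on \(\mathbb T\setminus E\). Measurability of \(E\) is automatic: we may take \(E=\{T^*_{\mathbb T}P\le K\}\), which is closed because \(T^*_{\mathbb T}P\) is continuous.

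For \(\B(\mathbb T)\), given \(A>0\) and \(\varepsilon\in(0,1)\), apply the statement with \(s=\min\{1,\varepsilon/2\}\) and \(K=\max\{A,1\}+1\). Then \(\|P\|_\infty\le1\) and
\[
\mu_{\mathbb T}\{T^*P>A\}\ge1-\varepsilon/2>1-\varepsilon.
\]

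The main obstacle is locating precisely the finite statement in \cite{K99}, including its exact quantifiers and its convention for ordering ties. If only the infinite continuous function is given explicitly, I would instead extract the finite polynomial from his Baire-category or gluing step: truncate a divergent continuous \(f\) with small norm to a Fejér-type polynomial and control the threshold sums on a set of large measure. This route is more delicate, since truncation alters the magnitude ordering. For that reason the explicit finite lemma is the preferred route.
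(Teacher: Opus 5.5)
Your proposal matches the paper, which also imports K\"orner's finite lemma \cite[Lemma~7, p.~9]{K99} with the parameters renamed, and then obtains \(\B(\mathbb T)\) by choosing \(s<\min(1,\varepsilon)\) and \(K>\max(1,A)\), just as you do. Your normalization and tie-breaking safeguards are harmless but unnecessary: according to the paper, that lemma already uses the threshold convention \(|\widehat P(r)|\geq\delta\) and a probability measure on \(\mathbb T\), so no rescaling or perturbation is needed.
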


This is \cite[Lemma 7, p.~9]{K99}, with the source's parameters
\(\epsilon,K\) and exceptional set \(B\) renamed \(s,K,E\).
The identification \(t\mapsto e^{2\pi it}\) preserves probability
measure and the threshold convention \(|\widehat P(r)|\geq\delta\);
no rescaling is needed. Continuity of \(P\) identifies its essential
supremum with its uniform norm. Taking
\(0<s<\min(1,\varepsilon)\) and \(K>\max(1,A)\) gives
\(\B(\mathbb T)\).

\begin{proposition}\label{main:walsh}
For each \(0<e<1\), there exist \(\kappa(e)>0\)
and an integer \(N_0(e)\) such that, for every \(N\geq N_0(e)\),
a finite Walsh polynomial \(Q\) satisfies
\[
 \|Q\|_\infty\leq1,\qquad
 \mu_{K_2}\{T^*_{K_2}Q\geq\kappa(e)\sqrt N\}\geq1-e.
\]
In particular, \(\B(K_2)\) holds.
\end{proposition}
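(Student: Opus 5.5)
The plan is to import the result from K\"orner's Walsh paper \cite{K06} and transport it to \(K_2\), in the same way Proposition~\ref{main:circle} was obtained from \cite{K99}. Only three things need checking: that the two measure spaces match, that the Fourier coefficients match exactly, and that K\"orner's notion of a ``large rearranged partial sum'' is the threshold maximal function \(T^*_{K_2}\) used here.

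\emph{Step 1: measure and characters.} Identify \([0,1)\) with \(K_2=\prod_{j\ge1}\mathbb Z/2\mathbb Z\) via the binary digit map \(t=\sum_j x_j2^{-j}\mapsto(x_j)_{j\ge1}\), taking the terminating expansion at dyadic rationals. This map is a measure isomorphism between Lebesgue measure and Haar measure \(\mu_{K_2}\) off a null set. The Rademacher function \(r_j\) corresponds to the coordinate character \(x\mapsto(-1)^{x_j}\), and each Walsh function \(w_n\) (Paley enumeration) corresponds to the finite product \(\prod_{j\in B(n)}(-1)^{x_j}\), where \(B(n)\) is the set of positions of the binary digits of \(n\). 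This gives a bijection between the Walsh system and \(\widehat{K_2}\). Both normalizations are probability measures, so the Walsh coefficients of a function equal the Haar Fourier coefficients of its transport, with no rescaling.

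\emph{Step 2: norms and threshold sums.} A finite Walsh polynomial \(Q\) depends only on finitely many digits, say the first \(m\). On \([0,1)\) it is therefore a step function constant on dyadic intervals of length \(2^{-m}\), and on \(K_2\) it is a continuous function constant on cylinder sets. Each cell has positive measure, so the essential supremum on \([0,1)\) equals the uniform norm on \(K_2\). Threshold sums are defined purely through the coefficients, so \(T_\delta\) commutes with the transport, and the measures of the sets \(\{T^*Q\ge\kappa\sqrt N\}\) agree.

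\emph{Step 3: matching K\"orner's statement to the threshold convention.} This is the step I expect to be the main obstacle. K\"orner's lemma supplies \(\|Q\|_\infty\le1\) and a lower bound of the form \(\kappa(e)\sqrt N\), holding with probability at least \(1-e\), for some maximal function of partial sums taken in a decreasing order of coefficient magnitude. If his partial sums are threshold sums, or if all nonzero magnitudes of his polynomial are distinct, then they are literally values of \(T_\delta Q\), and we are done.

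Otherwise I would perturb. Multiply each nonzero coefficient by a factor \(1+\eta_\gamma\), with the \(\eta_\gamma\) small, distinct, and chosen so that every partial sum of the ordering K\"orner uses becomes a threshold sum of the perturbed polynomial. Since the spectrum \(S\) is finite, this changes every partial sum, and the sup norm, by at most \(\eta\,\#S\,\max_\gamma|\widehat Q(\gamma)|\). Choosing \(\eta\) small, then dividing by \(1+o(1)\) to restore \(\|Q\|_\infty\le1\), and slightly lowering \(\kappa(e)\), preserves the stated bound. An \(e\)-dependent constant absorbs these losses, and the assertion stated for \(N\ge N_0(e)\) follows.

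\emph{Step 4: deducing \(\B(K_2)\).} Given \(A>0\) and \(0<\varepsilon<1\), take \(e=\varepsilon/2\) and choose \(N\ge N_0(e)\) with \(\kappa(e)\sqrt N>A\). Then
\[
\mu_{K_2}\{T^*_{K_2}Q>A\}\ge\mu_{K_2}\{T^*_{K_2}Q\ge\kappa(e)\sqrt N\}\ge1-\varepsilon/2>1-\varepsilon.
\]
This is the strict-inequality convention announced before Proposition~\ref{main:circle}.
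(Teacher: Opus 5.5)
Your proposal is correct and follows the paper's argument. The paper cites \cite[Theorem~6.5]{K06}, transports it to \(K_2\) through binary digit patterns so that Fourier coefficients, uniform norms and threshold superlevel measures agree, and then chooses \(e<\varepsilon\) and \(N\) with \(\kappa(e)\sqrt N>A\). Your Step~3 perturbation fallback is valid but unnecessary, because the paper uses K\"orner's theorem as already bounding the threshold maximal function \(T^*_{K_2}Q\) itself.
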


In \cite[Theorem 6.5, p.~723]{K06}, take the source parameter
\(\epsilon=e\) and set \(\kappa(e)=\kappa_0(e)\), retaining
\(N_0(e)\) and \(N\). The theorem gives the stated bounds for every
integer \(N\geq N_0(e)\), with \(\widehat Q(u)=0\) for
\(u>2^{4N}\); its additional coefficient bounds involving
\(\alpha(e)\) are not needed here.

The source uses Walsh functions on \([0,1)\) with Lebesgue measure
and half-open dyadic intervals \cite[p.~709]{K06}. The support bound
makes \(Q\) depend on at most \(4N+1\) binary digits. These digit
patterns have the same uniform distribution on the interval and
on \(K_2\), so Fourier coefficients and threshold superlevel measures
agree. Since each pattern has positive measure, the interval
essential supremum also equals the compact-group uniform norm.
Finally, choose \(0<e<\varepsilon\) and \(N\geq N_0(e)\) with
\(\kappa(e)\sqrt N>A\) to obtain \(\B(K_2)\).

\Needspace{6\baselineskip}
The remaining finite model concerns odd cyclic products. The construction
proved in Section~\ref{sec:odd} gives the required input directly.

\begin{proposition}\label{main:oddinput}
For every odd integer \(p\geq3\), \(A>0\), and \(0<\varepsilon<1\),
there exist a positive integer \(D\) and a character polynomial \(P\) on
\((\mathbb Z/p\mathbb Z)^D\) with
\[
 \|P\|_\infty\leq1,\qquad \mu_D\{T^*P>A\}>1-\varepsilon.
\]
Consequently \(\B(K_p)\) holds.
\end{proposition}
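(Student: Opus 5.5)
The first assertion is Theorem~\ref{thm:finite}, read without its mean-zero conclusion: given odd \(p\geq3\), \(A>0\) and \(0<\varepsilon<1\), that theorem supplies \(D\) and a character polynomial \(P\) on \(\R_p^D=(\mathbb Z/p\mathbb Z)^D\) with \(\|P\|_\infty\leq1\) and \(\mu_D\{T^*P>A\}>1-\varepsilon\). The only thing to check is that the \(T^*\) of Section~\ref{sec:odd} coincides with \(T^*_{\R_p^D}\) of Subsection~\ref{sec:transfer}. Both use normalized counting measure and the same coefficient formula, since the characters of \(\R_p^D\) are exactly the \(\chi_\xi\). Both maxima are taken over the finitely many threshold levels, and by the remark at the start of Subsection~\ref{sec:transfer} the maximum over \(\delta>0\) is attained at the distinct nonzero coefficient magnitudes. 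So the two definitions agree.

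For \(\B(K_p)\), I will use the coordinate projection \(q_D:K_p\to\R_p^D\) onto the first \(D\) factors. It is a continuous surjective homomorphism of compact Hausdorff abelian groups. Given \(A\) and \(\varepsilon\), I take \(P\) on \(\R_p^D\) from the first part; this shows \(\B(\R_p^D)\) holds with that particular \(D\) for the chosen \((A,\varepsilon)\). Lemma~\ref{main:quotient} then gives three facts: \(P\circ q_D\) is a character polynomial on \(K_p\), since characters pull back to characters; \(T^G_\delta(P\circ q_D)=(T_\delta P)\circ q_D\), so \(T^*_{K_p}(P\circ q_D)=(T^*P)\circ q_D\); and \((q_D)_*\mu_{K_p}=\mu_D\). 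Together these give \(\|P\circ q_D\|_\infty\leq1\) and \(\mu_{K_p}\{T^*_{K_p}(P\circ q_D)>A\}=\mu_D\{T^*P>A\}>1-\varepsilon\).

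One point needs care: \(D\) depends on \((A,\varepsilon)\), so \(\B(K_p)\) does not follow by applying the lemma's implication ``\(\B(H)\Rightarrow\B(G)\)'' to a single fixed quotient. Instead I will carry out the transfer pointwise in the parameters, choosing a different quotient \(\R_p^D\) for each \((A,\varepsilon)\) as above. Apart from this, I expect no obstacle; the argument is bookkeeping.
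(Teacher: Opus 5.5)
Your proposal is correct and follows the paper's proof: the finite assertion is Theorem~\ref{thm:finite}, and \(\B(K_p)\) follows by lifting through the projection of \(K_p\) onto its first \(D\) coordinates using Lemma~\ref{main:quotient}. Your remark that \(D\) depends on \((A,\varepsilon)\), so the transfer must be applied separately for each choice of parameters, is a correct clarification of a point the paper leaves implicit.
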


\begin{proof}
The finite assertion is Theorem~\ref{thm:finite}. Lift by the projection of \(K_p\) onto
its first \(D\) coordinates and apply Lemma~\ref{main:quotient}.
\end{proof}

Only prime \(p\) will be needed in the main theorem, and no
constants uniform in \(p\) are required for that reduction.

These three models yield \(\B(G)\) for every infinite compact
abelian group in Proposition~\ref{main:allB}, by cyclic sampling
and quotient transfer. The new construction is needed to cover the
odd-prime case; Sections~\ref{sec:construction}--\ref{sec:category} then give the continuous example
and the category conclusion.

\subsection{Reduction using the dual group}

\Needspace{6\baselineskip}
Let $G$ be a compact Hausdorff abelian group. Characters of large finite order allow us to use the circle construction
even when \(G\) has no circle quotient.

\begin{lemma}\label{main:sampling}
If \(\widehat G\) contains characters of arbitrarily large finite
order, then \(\B(G)\) holds.
\end{lemma}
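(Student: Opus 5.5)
Fix \(A>0\) and \(0<\varepsilon<1\). The plan is to take the circle polynomial from Proposition~\ref{main:circle}, sample it on a cyclic subgroup of the circle, and lift it to \(G\) through a character \(\gamma\) of large finite order \(m\). Such a \(\gamma\) is a continuous surjective homomorphism \(G\to\mu_m\) onto the \(m\)-th roots of unity, so by Lemma~\ref{main:quotient} it suffices to prove \(\B(\mathbb Z/m\mathbb Z)\) with constants uniform in \(m\), for all sufficiently large \(m\).

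First apply Proposition~\ref{main:circle} with a small \(s\) and with \(K>A+1\), say. This gives a trigonometric polynomial \(P(t)=\sum_{|r|\leq R}c_re^{2\pi irt}\) with \(\|P\|_\infty\leq s\leq1\) and \(T^*_{\mathbb T}P>K\) off a set \(E\) of measure at most \(s<\varepsilon/2\). For \(m>2R\), define \(P_m\) on \(\mathbb Z/m\mathbb Z\) by \(P_m(k)=P(k/m)\). Because the frequencies \(-R,\dots,R\) are distinct modulo \(m\), the DFT of \(P_m\) equals \(c_r\) at the residue of \(r\) and vanishes elsewhere. Hence the threshold sums match exactly:
\[
 T_\delta P_m(k)=(T_\delta P)(k/m).
\]
This also gives \(\|P_m\|_\infty\leq1\).

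Next, let \(\lambda_1>\cdots>\lambda_q\) be the finitely many distinct coefficient magnitudes of \(P\). Then \(T^*P=\max_j|T_{\lambda_j}P|\) is a fixed continuous function, which does not depend on \(m\). The set \(U=\{t:T^*P(t)>K\}\) is open, and \(\mu_{\mathbb T}(U)>1-\varepsilon/2\). The remaining task is equidistribution: we need
\[
 \frac1m\#\{k:k/m\in U\}>1-\varepsilon
\]
for all large \(m\). This is the step that needs care. An open set need not be Jordan measurable, so Riemann-sum convergence cannot be applied to \(1_U\) directly. I would instead use the fact that \(T^*P\) is continuous: choose a continuous \(\phi\) with \(0\leq\phi\leq1_U\) and \(\int\phi>1-\varepsilon/2\). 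One way is \(\phi=\min\{1,\max\{0,T^*P-K\}/\eta\}\) with \(\eta\) small, using monotone convergence as \(\eta\downarrow0\). Riemann sums of the continuous function \(\phi\) over the points \(k/m\) converge to \(\int\phi\), so the sampled proportion eventually exceeds \(1-\varepsilon\).

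Finally, pick \(\gamma\in\widehat G\) of finite order \(m\) larger than the threshold \(m_0\) coming from the two previous steps; such \(\gamma\) exists by hypothesis. Identify \(\gamma(G)=\mu_m\cong\mathbb Z/m\mathbb Z\) via \(e^{2\pi ik/m}\leftrightarrow k\), and use the surjective homomorphism \(\gamma\) in Lemma~\ref{main:quotient}. This transfers \(P_m\) to a character polynomial \(P_m\circ\gamma\) on \(G\) with uniform norm at most \(1\) and \(\mu_G\{T^*_G>A\}>1-\varepsilon\), which establishes \(\B(G)\).
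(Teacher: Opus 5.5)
Your proposal is correct and is essentially the paper's argument: compose the circle polynomial from Proposition~\ref{main:circle} with a character of order \(m>2R\), so that no aliasing occurs, and bound the proportion of sampled points in the open set \(U\) from below using a continuous minorant of \(1_U\). Your minorant, built from the continuity of \(T^*P\) together with monotone convergence and Riemann sums, replaces the paper's inner regularity, Urysohn's lemma, and weak convergence \(\nu_M\to\mu_{\mathbb T}\); routing through \(\mathbb Z/m\mathbb Z\) and Lemma~\ref{main:quotient} replaces the paper's direct orthogonality check. One wording slip: \(\B(\mathbb Z/m\mathbb Z)\) is false for every fixed \(m\), since threshold sums of a polynomial with \(\|P\|_\infty\leq1\) are bounded by \(m\). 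The reduction should therefore say, as you in fact carry it out, that for fixed \(A,\varepsilon\) the required polynomial exists on \(\mathbb Z/m\mathbb Z\) for all \(m\geq m_0(A,\varepsilon)\).
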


\begin{proof}
Fix \(A,\varepsilon\) and use Proposition~\ref{main:circle} to choose
\[
 P(z)=\sum_{k=-d_0}^{d_0} a_kz^k,\qquad \|P\|_\infty\leq1,\qquad
 \mu_{\mathbb T}(U)>1-\varepsilon/2,\quad U=\{T^*_{\mathbb T}P>A\}.
\]
The set \(U\) is open. Write \(\nu_M\) for uniform measure on
the \(M\)-th roots of unity. For a fixed nonzero integer \(k\),
\(\int z^k\,d\nu_M=0\) whenever \(M>|k|\), whereas
\(\int1\,d\nu_M=1\). Uniform approximation by trigonometric
polynomials therefore shows that \(\nu_M\to \mu_{\mathbb T}\) weakly.

Here only a lower bound for the open set \(U\) is needed;
there is no assumption on its boundary. By regularity, choose a compact
set \(K\subseteq U\) such that
\(\mu_{\mathbb T}(K)>1-\varepsilon\). By Urysohn's lemma, there is a
function \(\varphi\in C(\mathbb T)\) satisfying
\[
 0\leq\varphi\leq 1_U
 \qquad\text{and}\qquad
 \varphi=1\quad\text{on }K.
\]
Consequently,
\[
 \int_{\mathbb T}\varphi\,d\mu_{\mathbb T}
 \geq\mu_{\mathbb T}(K)>1-\varepsilon.
\]
Weak convergence gives
\(\nu_M(U)\geq\int\varphi\,d\nu_M>1-\varepsilon\) for all sufficiently
large \(M\).

Choose a character \(\chi\) of order \(M\) this large, also with
\(M>2d_0\), and set \(Q=P\circ\chi\).
Its image is exactly the group of \(M\)-th roots, and Haar measure
pushes forward to \(\nu_M\). The characters \(\chi^k\),
\(-d_0\leq k\leq d_0\), are distinct: equality would require
\(M\mid k-\ell\). Orthogonality gives exactly the coefficient
list \(a_k\), without aliasing, and hence
\[
 T_\delta^GQ(x)=T_\delta^{\mathbb T}P(\chi(x)).
\]
Thus \(\|Q\|_\infty\leq1\) and
\(\mu_G\{T^*_GQ>A\}=\nu_M(U)>1-\varepsilon\).
\end{proof}

\Needspace{6\baselineskip}
When character orders are bounded, the following algebraic observation will
lead instead to a quotient of the form \(K_p\).

\begin{lemma}\label{main:bounded}
Every infinite abelian group of bounded exponent contains
\(\bigoplus_{j\geq1}\mathbb Z/p\mathbb Z\) as a subgroup
for some prime \(p\).
\end{lemma}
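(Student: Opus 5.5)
Let \(\Gamma\) be an infinite abelian group of bounded exponent \(m\). I will first reduce to a \(p\)-group, then extract an infinite independent family of elements of order \(p\) whose generated subgroup is the required direct sum.

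\textbf{Reduction to a \(p\)-primary component.} Since \(m\Gamma=0\), every element has finite order dividing \(m\). By the primary decomposition of torsion abelian groups, \(\Gamma=\bigoplus_{p\mid m}\Gamma_p\), where \(\Gamma_p\) consists of the elements of \(p\)-power order. Only finitely many primes divide \(m\), so some \(\Gamma_p\) is infinite. Replacing \(\Gamma\) by this \(\Gamma_p\), I may assume \(p^k\Gamma=0\) for some \(k\).

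\textbf{Finding an infinite socle.} Consider the chain
\[
\Gamma\supseteq p\Gamma\supseteq\cdots\supseteq p^k\Gamma=0.
\]
Some consecutive quotient \(p^{i}\Gamma/p^{i+1}\Gamma\) must be infinite. Otherwise, an induction from the bottom of the chain would show that every \(p^i\Gamma\) is finite, so \(\Gamma\) would be finite. Let \(i\) be the largest index with \(p^i\Gamma\) infinite. Then \(p^{i+1}\Gamma\) is finite.

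Let \(H=p^i\Gamma\). Multiplication by \(p\) maps \(H\) into the finite group \(p^{i+1}\Gamma\). Its kernel \(H[p]=\{h\in H:ph=0\}\) therefore has finite index in the infinite group \(H\), so \(H[p]\) is infinite.

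\textbf{Choosing a basis.} \(H[p]\) is an infinite vector space over \(\mathbb F_p\). Choose linearly independent vectors \(e_1,e_2,\ldots\) recursively: at each step pick \(e_{n+1}\) outside the finite span of \(e_1,\ldots,e_n\), which is possible because \(H[p]\) is infinite. The subgroup generated by the \(e_j\) is then isomorphic to \(\bigoplus_{j\geq1}\mathbb Z/p\mathbb Z\), and it lies in \(\Gamma\).

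\textbf{Shortcut and main step.} A simpler route avoids the index bookkeeping: by Prüfer's first theorem, a bounded-exponent abelian group is a direct sum of cyclic groups. Infinitely many summands must occur, because the exponent is bounded and only finitely many cyclic orders divide it. Some order \(p^e\) then appears infinitely often, and the elements \(p^{e-1}g\) taken in those summands generate the desired subgroup.

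I would nonetheless present the elementary argument above to stay self-contained. Its only delicate point is showing that \(H[p]\) is infinite, that is, keeping track of where the chain of the groups \(p^i\Gamma\) passes from infinite to finite.
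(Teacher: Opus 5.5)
Your proof is correct and follows essentially the paper's route: pass to an infinite $p$-primary component, show its $p$-torsion subgroup is infinite because multiplication by $p$ has finite image at the right level, then greedily extract an independent sequence over $\mathbb F_p$. The difference is cosmetic. The paper proves the middle step by induction on the exponent (a $p^r$-bounded group with finite $A[p]$ is finite). You instead locate the last infinite term $p^i\Gamma$ of the chain and apply the kernel/finite-image argument there once.
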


\begin{proof}
Use additive notation. We first isolate an infinite component whose
exponent is a power of a single prime. If \(L\Gamma=0\) and
\(L=\prod_{i=1}^s p_i^{r_i}\), Chinese remainder projections give
\(\Gamma=\bigoplus_{i=1}^s A_i\), where \(p_i^{r_i}A_i=0\).
Explicitly choose integers \(e_i\) equal to \(1\) modulo
\(p_i^{r_i}\) and \(0\) modulo the other factors, and let
\(A_i=e_i\Gamma\). Their sum is the identity and their pairwise
products vanish on \(\Gamma\), giving this decomposition.
At least one component \(A\), killed by \(p^r\), is infinite.

It remains to find infinitely many elements of order \(p\) in this
component, which is why we consider its subgroup \(A[p]\).
We claim \(A[p]=\{a:pa=0\}\) is infinite. Inductively, a group
killed by \(p^r\) with finite \(A[p]\) must be finite:
this is immediate for \(r=1\); for \(r>1\), the group \(pA\)
has smaller exponent and \((pA)[p]\subseteq A[p]\), so is finite
by induction. The map \(A\to pA\) then has finite kernel
\(A[p]\) and finite image, making \(A\) finite.

Having obtained an infinite elementary abelian \(p\)-subgroup, we can
now extract the desired direct sum.
The infinite vector space \(A[p]\) over \(\mathbb F_p\)
contains a countable independent sequence: at each step choose
outside the finite span of preceding elements. Its span is the
required direct sum.
\end{proof}

\begin{lemma}\label{main:primequotient}
If \(\widehat G\) contains
\(\bigoplus_{j\geq1}\mathbb Z/p\mathbb Z\), then \(G\) has
a continuous quotient map onto \(K_p\).
\end{lemma}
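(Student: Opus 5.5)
We prove the lemma by Pontryagin duality: a subgroup of the dual corresponds to a quotient of \(G\). Let \(\Delta\subseteq\widehat G\) be the given copy of \(\bigoplus_{j\geq1}\mathbb Z/p\mathbb Z\), with generators \(\gamma_1,\gamma_2,\ldots\), each of order exactly \(p\) and independent over \(\mathbb F_p\). Each \(\gamma_j\) takes values in the \(p\)-th roots of unity, so we can write \(\gamma_j(x)=\omega^{c_j(x)}\) with a unique \(c_j(x)\in\mathbb Z/p\mathbb Z\). We then define
\[
 q:G\to K_p,\qquad q(x)=(c_j(x))_{j\geq1}.
\]
This map is a homomorphism because each \(\gamma_j\) is one. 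It is continuous because each coordinate \(c_j\) is continuous into the discrete finite group \(\mathbb Z/p\mathbb Z\), and \(K_p\) carries the product topology.

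The main step is surjectivity. Since \(G\) is compact and \(q\) is continuous, \(q(G)\) is a closed subgroup of \(K_p\). If it were proper, then the quotient \(K_p/q(G)\) would be a nontrivial compact abelian group, so it would have a nontrivial character. That character gives a nontrivial character \(\lambda\) of \(K_p\) that is trivial on \(q(G)\). Every continuous character of \(K_p\) depends on only finitely many coordinates, so \(\lambda(y)=\omega^{\sum_{j\leq m}n_jy_j}\) with \(n_j\in\mathbb Z/p\mathbb Z\) not all zero. Then
\[
 \lambda\circ q=\prod_{j\leq m}\gamma_j^{n_j}
\]
would be trivial on \(G\), contradicting independence of the \(\gamma_j\). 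Hence \(q(G)=K_p\). Alternatively, we could check surjectivity onto each finite projection \((\mathbb Z/p\mathbb Z)^m\) and then use compactness with the closed image. For the finite projection, the image is a subgroup \(H\) whose annihilator in \((\mathbb Z/p\mathbb Z)^m\) is trivial by independence, so \(H\) is everything. I expect this step to be the only real obstacle, and the finite-level version avoids invoking the full duality theorem.

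Finally, a continuous surjective homomorphism between compact Hausdorff groups is automatically a quotient map: \(G/\ker q\to K_p\) is a continuous bijection from a compact space onto a Hausdorff space, hence a homeomorphism. This is exactly the input Lemma~\ref{main:quotient} requires, so \(\B(K_p)\) from Proposition~\ref{main:oddinput} (for odd \(p\)) or Proposition~\ref{main:walsh} (for \(p=2\)) transfers to \(G\).
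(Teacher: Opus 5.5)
Your proof is correct. The map \(q\) is the same as the paper's, but your main surjectivity argument is different. You pass to the nontrivial compact group \(K_p/q(G)\) and use that it has a nontrivial continuous character. Since characters of \(K_p\) depend on only finitely many coordinates, this character gives a dependence relation among the \(\gamma_j\).

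The paper instead uses exactly what you list as the alternative. Each finite projection of \(q(G)\) onto \(\mathbb F_p^n\) is surjective, because a proper subspace is killed by a nonzero linear functional, which would again be a dependence relation. Hence \(q(G)\) is dense in the product topology, and being compact it equals \(K_p\).

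The finite-level route is more elementary: it needs only linear algebra over \(\mathbb F_p\) and closedness of the image. Your main route needs the fact that a nontrivial compact abelian group has a nontrivial continuous character (Peter--Weyl), applied to the quotient. That input is avoidable here: a proper closed \(q(G)\) already misses some basic cylinder, so some finite projection fails to be surjective. In exchange, your version is visibly a special case of the annihilator correspondence \(\widehat{G/\Lambda^\perp}\cong\Lambda\), which the paper invokes later in Corollary~\ref{main:subgroup}.
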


\begin{proof}
Let \(\chi_j\) be independent generators of order \(p\).
Identify the \(p\)-th roots with \(\mathbb Z/p\mathbb Z\);
the map \(q(x)=(\chi_j(x))_{j\geq1}\) is a continuous homomorphism.
Its image is compact, hence closed. Every projection onto the
first \(n\) coordinates is surjective: otherwise its image is a
proper subspace of \(\mathbb F_p^n\), annihilated by a nonzero
linear functional, giving a dependence relation among
\(\chi_1,\ldots,\chi_n\).
Full finite-coordinate projections make the image dense in
the product topology; closedness makes it all of \(K_p\).
\end{proof}
We now combine the preceding results to obtain the following fundamental
conclusion about \(\B(G)\).
\begin{proposition}\label{main:allB}
Every infinite compact Hausdorff abelian group has \(\B(G)\).
\end{proposition}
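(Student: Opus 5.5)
The plan is a trichotomy on the dual \(\Gamma=\widehat G\), which is infinite by Lemma~\ref{main:infinitedual}. In every case \(\B(G)\) follows from one of the three model inputs, either by quotient transfer (Lemma~\ref{main:quotient}) or by cyclic sampling (Lemma~\ref{main:sampling}). The three cases are:
\begin{itemize}
 \item \(\Gamma\) contains an element of infinite order;
 \item every element of \(\Gamma\) has finite order, and the orders are unbounded;
 \item \(\Gamma\) has bounded exponent.
\end{itemize}

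\emph{Case 1.} Let \(\chi\in\Gamma\) have infinite order, and view \(\chi\) as a continuous homomorphism \(G\to\mathbb T\). Its image is a compact subgroup of \(\mathbb T\). It cannot be finite, since a finite image of order \(m\) would give \(\chi^m=1\). The only infinite closed subgroup of \(\mathbb T\) is \(\mathbb T\) itself, so \(\chi\) is a continuous surjection onto \(\mathbb T\). Proposition~\ref{main:circle} gives \(\B(\mathbb T)\), and Lemma~\ref{main:quotient} transfers it to \(\B(G)\).

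There is a shortcut that removes the need to separate Case 1 from Case 2. In Case 1 the powers \(\chi^k\) already have orders \(M\) for arbitrarily large \(M\) only if we pass to sampling, but \(\chi\) itself has infinite order, so Lemma~\ref{main:sampling} does not literally apply. It is cleaner to merge the cases as follows: either \(\Gamma\) has unbounded element orders, including infinite ones, or it has bounded exponent.

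\emph{Case 2.} If \(\Gamma\) is torsion with unbounded orders, then it contains characters of arbitrarily large finite order, and Lemma~\ref{main:sampling} gives \(\B(G)\) directly.

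\emph{Case 3.} If \(\Gamma\) has bounded exponent, then, being infinite, Lemma~\ref{main:bounded} supplies a prime \(p\) with \(\bigoplus_{j\ge1}\mathbb Z/p\mathbb Z\subseteq\Gamma\). Lemma~\ref{main:primequotient} then gives a continuous surjection \(G\to K_p\). Property \(\B(K_p)\) holds by Proposition~\ref{main:walsh} when \(p=2\) and by Proposition~\ref{main:oddinput} when \(p\) is odd. Lemma~\ref{main:quotient} transfers it to \(\B(G)\).

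The three cases are exhaustive. If no element of \(\Gamma\) has infinite order and the orders are bounded, the exponent is bounded. The only step needing care is the claim in Case 1 that \(\chi(G)=\mathbb T\). I would justify it by noting that a proper closed subgroup of \(\mathbb T\) is finite: an infinite subgroup has an accumulation point, so it contains arbitrarily small rotations and is therefore dense. Beyond that, the argument is bookkeeping of the earlier results, and I expect no real obstacle.
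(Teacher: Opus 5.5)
Your proof is correct and follows the paper's trichotomy exactly: a circle quotient from an infinite-order character, cyclic sampling for unbounded finite orders, and a $K_p$ quotient with the Walsh or odd-cyclic input for bounded exponent; your accumulation-point argument for $\chi(G)=\mathbb T$ is a valid substitute for the paper's preimage-in-$\mathbb R$ argument. You should drop the aside about merging Cases~1 and~2, since that merge cannot work: for $G=\mathbb T$ the dual $\mathbb Z$ has no nontrivial elements of finite order, so Lemma~\ref{main:sampling} is unavailable there and Case~1 genuinely needs the separate quotient argument you give.
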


\begin{proof}
By Lemma~\ref{main:infinitedual}, the dual group \(\widehat G\) is
infinite. We divide the argument into three exhaustive cases.

Suppose first that \(\widehat G\) contains a character \(\chi\) of
infinite order. The image \(\chi(G)\) is a compact, hence closed,
subgroup of \(\mathbb T\). Since \(\chi\) has infinite order, this image
is infinite. Every proper closed subgroup of \(\mathbb T\) is finite:
the inverse image of a closed subgroup under
\(\mathbb R\to\mathbb R/\mathbb Z\) is a closed subgroup containing
\(\mathbb Z\), hence either \(\mathbb R\) or \((1/M)\mathbb Z\) for
an integer \(M\geq1\). So \(\chi(G)=\mathbb T\). Thus
\(\chi\colon G\to\mathbb T\) is a
continuous surjective homomorphism. Proposition~\ref{main:circle} gives
\(\B(\mathbb T)\), and Lemma~\ref{main:quotient} transfers this property
through \(\chi\), yielding \(\B(G)\).

It remains to consider the case in which every character has finite
order. If these orders are unbounded, Lemma~\ref{main:sampling} applies
directly and gives \(\B(G)\).

Finally, suppose that the orders of all characters are bounded by some
integer \(J\). Then
\[
 L=\operatorname{lcm}(1,\ldots,J)
\]
annihilates \(\widehat G\), so \(\widehat G\) has bounded exponent.
Since \(\widehat G\) is infinite, Lemma~\ref{main:bounded} gives a prime
\(p\) and a subgroup
\[
 \bigoplus_{j\geq1}\mathbb Z/p\mathbb Z
 \subseteq\widehat G.
\]
By Lemma~\ref{main:primequotient}, this subgroup determines a continuous
quotient map
\[
 q\colon G\longrightarrow
 K_p:=\prod_{j\geq1}\mathbb Z/p\mathbb Z.
\]
If \(p=2\), Proposition~\ref{main:walsh} gives \(\B(K_2)\); if \(p\) is
odd, Proposition~\ref{main:oddinput} gives \(\B(K_p)\). In either case,
Lemma~\ref{main:quotient} transfers the property from \(K_p\) to \(G\).
Thus \(G\) has property \(\B(G)\) in all three cases.
\end{proof}

\section{Construction of a continuous divergent function}\label{sec:construction}
The category argument in Section~\ref{sec:category} will show that
arbitrarily small divergent functions are generic. We first give an
explicit construction that, in addition, permits the Fourier
coefficients at any prescribed finite set of frequencies to vanish.

\begin{proposition}\label{main:gluing}
If \(G\) is infinite and has \(\B(G)\), then, for every finite
\(F\subseteq\widehat G\) and every \(\rho>0\), there is an
\(f\in\D(G)\) with \(\|f\|_\infty<\rho\) and
\(\widehat f=0\) on \(F\).
\end{proposition}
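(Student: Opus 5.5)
We build \(f\) as a uniformly convergent series \(f=\sum_{m\geq1}c_m\eta_mP_m\) of modulated, scaled \(\B(G)\)-polynomials, in the style of K\"orner's gluing. The \(P_m\) have pairwise disjoint spectra, avoid \(F\), and have rapidly separated coefficient scales. Every threshold sum of \(f\) taken at a level belonging to block \(m\) then splits into a full sum of the earlier blocks, plus a threshold sum of block \(m\), plus nothing from later blocks.

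Concretely, choose \(c_m>0\) with \(\sum_m c_m<\rho\), say \(c_m=\rho 2^{-m-1}\), targets \(A_m\to\infty\) with \(c_mA_m\to\infty\), and \(\varepsilon_m=2^{-m}\). Inductively, \(\B(G)\) gives \(P_m\) with \(\|P_m\|_\infty\leq1\) and \(\mu_G\{T^*P_m>A_m\}>1-\varepsilon_m\). By Lemma~\ref{main:modulation}, which preserves norms and threshold magnitudes, we choose \(\eta_m\) so that \(\eta_m\supp\widehat P_m\) avoids \(F\) and all previously used spectra; this uses that \(\widehat G\) is infinite. Let \(\lambda_m>0\) be the smallest nonzero coefficient magnitude of \(c_1\eta_1P_1+\cdots+c_m\eta_mP_m\). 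To keep later blocks below every earlier level, we replace \(c_{m+1}\) by something smaller if necessary so that \(c_{m+1}<\lambda_m\). By \eqref{main:scaling}, all coefficients of block \(m+1\) then have modulus at most \(c_{m+1}<\lambda_m\). We must also keep \(c_{m+1}A_{m+1}\) large, so the order of choice is: first fix \(c_{m+1}\leq\min(\rho2^{-m-2},\lambda_m/2)\), and then pick \(A_{m+1}\) with \(c_{m+1}A_{m+1}\geq 2^{m+1}+\sum_{k\leq m}\|c_k\eta_kP_k\|_{\infty}\cdot(\text{bound})\). For the bound we need a uniform estimate on the earlier partial sums. Since the earlier blocks are finitely many polynomials, \(B_m:=\sum_{k\leq m}\|T^*(c_k P_k)\|_\infty<\infty\), and it suffices to demand \(c_{m+1}A_{m+1}>B_m+m+1\).

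Disjointness of spectra and uniform convergence give \(\widehat f=\sum_m c_m\widehat{\eta_mP_m}\) coefficientwise, so \(f\in C(G)\), \(\|f\|_\infty\leq\sum c_m<\rho\), and \(\widehat f=0\) on \(F\). For a threshold \(\delta\) with \(\lambda_{m}\le\) \(\delta\le\) (levels of block \(m\)), the separation \(c_{k}<\lambda_{k-1}\) implies
\[
 T_\delta f=\sum_{k<m}c_k\eta_kP_k+\eta_mc_mT_{\delta/c_m}P_m .
\]
This is exact, and ties cause no trouble because the blocks have disjoint spectra. Hence on \(\{T^*P_m>A_m\}\) we get \(\sup_{\delta}|T_\delta f|\geq c_mA_m-B_{m-1}>m\), attained at a \(\delta\le\lambda_{m-1}\). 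The sets \(\{T^*P_m\le A_m\}\) have summable measures \(\varepsilon_m\), so Borel--Cantelli puts almost every \(x\) in \(\{T^*P_m>A_m\}\) for all large \(m\). At such \(x\) the thresholds realizing the bound tend to \(0\), so \(\limsup_{\delta\downarrow0}|T_\delta f(x)|=\infty\) and \(f\in\D(G)\). Measurability is handled by Lemma~\ref{main:thresholds}.

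The main obstacle is getting the order of parameter choices right. The scale \(c_{m+1}\) must sit below all earlier coefficient levels, yet \(c_{m+1}A_{m+1}\) must beat the accumulated earlier partial sums. This works because \(\B(G)\) allows arbitrary \(A\) with \(\|P\|_\infty\le1\) fixed, so \(A_{m+1}\) is chosen after \(c_{m+1}\). Crucially, there is no loss from coefficient magnitudes being too large, since \(|\widehat P_m|\le1\) by \eqref{main:scaling}. A second point to check is that the earlier blocks enter fully and not partially, which holds precisely because \(\delta\le c_m<\lambda_{m-1}\).
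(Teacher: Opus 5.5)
Your proposal is correct and follows essentially the same route as the paper. It places the scales \(c_m\) below all earlier nonzero coefficient magnitudes, chooses the \(\B(G)\)-targets only after \(c_m\) is fixed, modulates to get disjoint spectra avoiding \(F\), splits exactly as \(T_\delta f=\sum_{k<m}c_k\eta_kP_k+c_m\eta_mT_{\delta/c_m}P_m\) at \(\delta=c_m\delta'\) with \(\delta'\) a nonzero coefficient level of \(P_m\), and finishes with Borel--Cantelli. The differences are cosmetic: the paper builds a norm-\(<1\) example and rescales by \(a<\rho\) at the end, and it bounds the earlier blocks simply by \(\sum_j c_j<1\) rather than by your valid but unnecessary \(B_{m-1}\).
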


\begin{proof}
\emph{Choice of blocks.}
First we construct an example of norm less than one.
Lemma~\ref{main:infinitedual} ensures that the dual is infinite,
which will be required for spectral avoidance.
Given nonzero preceding blocks \(P_j\), choose
\[
 0<c_n<2^{-n},\qquad
 c_n<\tfrac12\min\{|\widehat P_j(\gamma)|:
                    j<n,\ \widehat P_j(\gamma)\ne0\},
\]
omitting the second condition when \(n=1\).
The minimum is positive, since it concerns finitely many nonzero
coefficients. By \(\B(G)\), choose \(Q_n\) with
\[
 \|Q_n\|_\infty\leq1,\qquad
 \mu_G\{T^*_GQ_n>(n+2)/c_n\}>1-2^{-n}.
\]
 Choose \(\eta_n\) so that its translated spectrum
avoids \(F\) and all earlier spectra, using
Lemma~\ref{main:modulation}, and put \(P_n=c_n\eta_nQ_n\).

\emph{Coefficient separation and convergence.}
The spectra are pairwise disjoint. Since \(|\widehat Q_n|\leq1\),
\begin{equation}\label{main:bands}
 \|P_n\|_\infty\leq c_n,\qquad
 0<|\widehat P_n(\gamma)|\leq c_n
       <\tfrac12|\widehat P_j(\lambda)|
\end{equation}
for every \(j<n\) and all nonzero coefficients in question.
Scaling and modulation give
\[
 \mu_G(E_n)<2^{-n},\qquad E_n=\{x:T^*_GP_n(x)\leq n+2\}.
\]

The series \(f=\sum_nP_n\) converges uniformly, defines a continuous
function, and has norm less than one. Termwise integration is
valid under uniform convergence. Thus every nonzero Fourier
coefficient of \(f\) is exactly the unique coefficient in its
block, and all coefficients on \(F\) vanish.

\emph{Pointwise divergence.}
For \(x\notin E_n\), choose a nonzero coefficient level
\(\delta_n(x)\) of \(P_n\) such that
\(|T_{\delta_n(x)}P_n(x)|>n+2\).
Such a level exists by the coefficient-level formula for \(T^*_GP_n\)
in Section~\ref{sec:transfer};
moreover \(0<\delta_n(x)\leq c_n\).
Let
\[
 m_n=\min\{|\widehat P_n(\gamma)|:
                   \widehat P_n(\gamma)\ne0\}.
\]
Since \(\delta_n(x)\) is a coefficient level of \(P_n\), we have
\(\delta_n(x)\geq m_n\). For \(j<n\), \eqref{main:bands} gives
\(|\widehat P_j|>2c_n\geq\delta_n(x)\) on the nonzero coefficients
of \(P_j\). For \(m>n\), the choice of \(c_m\) gives
\[
 |\widehat P_m(\gamma)|\leq c_m<\tfrac12m_n<\delta_n(x)
\]
for every nonzero coefficient of \(P_m\). Consequently
\[
 T_{\delta_n(x)}f(x)
   =\sum_{j<n}P_j(x)+T_{\delta_n(x)}P_n(x).
\]
Moreover, by \eqref{main:bands} and the choice \(c_j<2^{-j}\),
\[
 \left|\sum_{j<n}P_j(x)\right|
 \leq\sum_{j<n}\|P_j\|_\infty
 \leq\sum_{j<n}c_j
 <\sum_{j=1}^{\infty}2^{-j}=1.
\]
Therefore,
\[
 \left|T_{\delta_n(x)}f(x)\right|
 \geq \left|T_{\delta_n(x)}P_n(x)\right|
      -\left|\sum_{j<n}P_j(x)\right|
 >(n+2)-1=n+1.
\]
Since \(\sum_n\mu_G(E_n)<\infty\), the first Borel--Cantelli lemma
implies that almost every \(x\) lies outside \(E_n\) for all sufficiently
large \(n\). For each such \(x\), the corresponding thresholds tend to
zero because \(0<\delta_n(x)\leq c_n<2^{-n}\). The preceding estimate
therefore proves the required unboundedness.

Finally, choose \(0<a<\rho\). By \eqref{main:scaling},
\[
 T_\delta(af)=aT_{\delta/a}f,
\]
so \(af\in\D(G)\). Since \(\|f\|_\infty<1\), we have
\[
 \|af\|_\infty<a<\rho,
\]
and scaling preserves the vanishing of the Fourier coefficients on
\(F\). Replacing \(f\) by \(af\) completes the proof.
\end{proof}

\section{Generic divergence and consequences}\label{sec:category}
In this section we show that if \(G\) is infinite and \(\B(G)\) holds,
then \(\D(G)\) is a dense \(G_\delta\) in \(C(G)\). The proof uses a
Baire category argument.
\subsection{Stability of threshold sums}

A positive threshold \(\delta\) is called \emph{regular for \(f\)}
if it differs from every coefficient magnitude \(|\widehat f(\gamma)|\).

\Needspace{6\baselineskip}
For the category argument, we need large threshold sums to persist under
small uniform perturbations. This is ensured by choosing thresholds away
from coefficient magnitudes.

\begin{lemma}\label{main:regular}
If \(\delta\) is regular for \(f\in C(G)\), then all sufficiently
small uniform perturbations \(g\) preserve its regularity and the
selected set \(S=\{\gamma:|\widehat f(\gamma)|\geq\delta\}\).
On that neighborhood,
\[
 \|T_\delta g-T_\delta f\|_\infty\leq |S|\,\|g-f\|_\infty.
\]
\end{lemma}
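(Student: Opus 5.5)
The key point is that a regular threshold sits at a positive distance from all coefficient magnitudes. The coefficient map is \(1\)-Lipschitz from the uniform norm, so small perturbations cannot move any magnitude across \(\delta\).

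We first prove a positive gap. Put
\[
 \eta=\inf_{\gamma\in\widehat G}\bigl||\widehat f(\gamma)|-\delta\bigr|,
\]
and show \(\eta>0\). By Lemma~\ref{main:thresholds}, only finitely many \(\gamma\) satisfy \(|\widehat f(\gamma)|\geq\delta/2\). For these finitely many characters the distance to \(\delta\) is positive by regularity, so their minimum distance is positive. Every other \(\gamma\) has \(|\widehat f(\gamma)|<\delta/2\), and hence distance greater than \(\delta/2\). Therefore \(\eta\geq\min\{\delta/2,\ \text{that finite minimum}\}>0\); if the finite set is empty, take \(\eta=\delta/2\). This is the only step needing care, because \(\widehat G\) may be uncountable and we must avoid accumulation of magnitudes at \(\delta\). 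The finiteness estimate from Lemma~\ref{main:thresholds} is exactly what rules this out.

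Next we establish stability. For \(g\in C(G)\) with \(\|g-f\|_\infty<\eta\), the estimate~\eqref{main:scaling} applied to \(g-f\) gives, for every \(\gamma\),
\[
 \bigl||\widehat g(\gamma)|-|\widehat f(\gamma)|\bigr|\leq|\widehat g(\gamma)-\widehat f(\gamma)|\leq\|g-f\|_\infty<\eta.
\]
If \(|\widehat f(\gamma)|\geq\delta\), then \(|\widehat f(\gamma)|\geq\delta+\eta\), so \(|\widehat g(\gamma)|>\delta\). If \(|\widehat f(\gamma)|<\delta\), then \(|\widehat f(\gamma)|\leq\delta-\eta\), so \(|\widehat g(\gamma)|<\delta\). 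Hence \(\delta\) remains regular for \(g\), and the selected set for \(g\) equals \(S\).

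Finally, on this neighborhood we have
\[
 T_\delta g-T_\delta f=\sum_{\gamma\in S}\bigl(\widehat g(\gamma)-\widehat f(\gamma)\bigr)\gamma .
\]
Since \(|\gamma|=1\), each term is bounded in uniform norm by \(\|g-f\|_\infty\). Summing over the finite set \(S\) gives the stated bound \(|S|\,\|g-f\|_\infty\).
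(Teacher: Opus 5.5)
Your proposal is correct and matches the paper's proof: finiteness of \(\{\gamma:|\widehat f(\gamma)|\geq\delta/2\}\) (from Lemma~\ref{main:thresholds}) gives a positive gap between \(\delta\) and all coefficient magnitudes, the bound \(|\widehat{g-f}(\gamma)|\leq\|g-f\|_\infty\) keeps every magnitude on its side of \(\delta\), and summing over the fixed finite set \(S\) gives the estimate. The only difference is the neighborhood radius: you use the full gap \(\eta\), while the paper uses half the gap \(d/2\). This makes no difference, because your inequality \(\|g-f\|_\infty<\eta\) is strict.
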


\begin{proof}
The set \(L=\{\gamma:|\widehat f(\gamma)|\geq\delta/2\}\)
is finite, and
\[
 d=\min\left(\{\delta/2\}\cup
 \{\,||\widehat f(\gamma)|-\delta|:\gamma\in L\,\}\right)>0.
\]
Every coefficient magnitude is at distance at least \(d\)
from \(\delta\). The bound
\[
 \big||\widehat g(\gamma)|-|\widehat f(\gamma)|\big|
 \leq|\widehat{g-f}(\gamma)|\leq\|g-f\|_\infty
\]
shows that perturbations of norm less than \(d/2\) preserve
the selected set and regularity. Summing differences over this
fixed finite set gives the estimate.
\end{proof}

\Needspace{6\baselineskip}
Restricting to regular thresholds loses no threshold sums, as the following
observation shows.

\begin{lemma}\label{main:replacement}
For any \(f\in C(G)\) and \(\delta>0\), there is a regular
\(\delta'\in(0,\delta)\) with \(T_{\delta'}f=T_\delta f\).
\end{lemma}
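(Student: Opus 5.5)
The proof rests on the finiteness statement of Lemma~\ref{main:thresholds}. Only finitely many coefficients satisfy \(|\widehat f(\gamma)|\geq\delta/2\). Consequently the set of coefficient magnitudes lying in \([\delta/2,\delta)\) is finite. The plan is to choose \(\delta'\) strictly between \(\delta\) and the largest coefficient magnitude below \(\delta\), when such a magnitude exists.

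Let \(M=\{|\widehat f(\gamma)|:\gamma\in\widehat G\}\cap[\delta/2,\delta)\), which is a finite set. Put \(m=\max M\) if \(M\neq\varnothing\), and \(m=\delta/2\) otherwise. In either case \(m<\delta\). Choose \(\delta'\in(m,\delta)\), for instance \(\delta'=(m+\delta)/2\).

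We check that \(\delta'\) is regular. Any magnitude equal to \(\delta'\) would lie in \([\delta/2,\delta)\), so it would belong to \(M\). But every element of \(M\) is at most \(m<\delta'\), so no magnitude equals \(\delta'\).

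We check next that the selected sets agree. Clearly \(\{|\widehat f|\geq\delta\}\subseteq\{|\widehat f|\geq\delta'\}\). For the converse, suppose \(\delta'\leq|\widehat f(\gamma)|<\delta\). Then \(|\widehat f(\gamma)|\in[\delta/2,\delta)\), so it lies in \(M\) and hence is at most \(m<\delta'\), which is a contradiction. The two selected sets are therefore equal, and so are the threshold sums.

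There is no real obstacle here. The only point needing care is ensuring \(M\) is finite, and this is exactly the cardinality bound in Lemma~\ref{main:thresholds}.
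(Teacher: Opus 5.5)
Your proposal is correct and follows the paper's proof: the paper sets \(b=\max\bigl(\{\delta/2\}\cup M\bigr)\), which equals your \(m\) in both cases. It then takes any \(\delta'\in(b,\delta)\), using the finiteness from Lemma~\ref{main:thresholds} to get regularity and equality of the selected sets exactly as you do.
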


\begin{proof}
The set
\[
 \left\{ |\widehat f(\gamma)|:
          \frac{\delta}{2}\leq |\widehat f(\gamma)|<\delta \right\}
\]
is finite. Define
\[
 b=\max\left(
     \left\{\frac{\delta}{2}\right\}
     \cup
     \left\{ |\widehat f(\gamma)|:
          \frac{\delta}{2}\leq |\widehat f(\gamma)|<\delta \right\}
     \right).
\]
Then \(b<\delta\). Thus every \(\delta'\in(b,\delta)\) is regular
for \(f\), and no coefficient magnitude lies in
\([\delta',\delta)\). Consequently,
\[
 T_{\delta'}f=T_\delta f.
\]
\end{proof}

\subsection{The category conclusion}

\Needspace{6\baselineskip}
We can now combine this stability with the finite examples to obtain
divergence on a dense \(G_\delta\) subset of \(C(G)\).

\begin{proposition}\label{main:category}
If \(G\) is infinite and has \(\B(G)\), then \(\D(G)\)
is a dense \(G_\delta\) in \(C(G)\).
\end{proposition}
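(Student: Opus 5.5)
We write \(\D(G)\) as a countable intersection of open dense sets. Rather than working with a.e.\ divergence directly, we control the measure of sets where threshold sums are large. For integers \(m,k\geq1\), put
\[
 U_{m,k}=\Bigl\{f\in C(G):\mu_G\Bigl(\bigcup_{\delta\in(0,1/m),\ \delta\text{ regular for }f}\{x:|T_\delta f(x)|>m\}\Bigr)>1-1/k\Bigr\}.
\]
This restricts to regular thresholds, so the union is over at most countably many distinct continuous functions and the set inside is open. By Lemma~\ref{main:replacement}, nothing is lost by this restriction: \(\sup_{0<\delta<1/m}|T_\delta f|\) equals the supremum over regular \(\delta\in(0,1/m)\).

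Suppose \(f\in\bigcap_{m,k}U_{m,k}\). For fixed \(m\), the set \(\{\sup_{0<\delta<1/m}|T_\delta f|>m\}\) then has full measure. Intersecting over \(m\) gives a full-measure set where \(\limsup_{\delta\downarrow0}|T_\delta f|=\infty\), so \(f\in\D(G)\). Conversely, if \(f\in\D(G)\), then for each \(m\) almost every \(x\) has some \(\delta<1/m\) with \(|T_\delta f(x)|>m\), and we may replace \(\delta\) by a regular one. Hence \(\D(G)=\bigcap_{m,k}U_{m,k}\).

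\emph{Openness.} Let \(f\in U_{m,k}\). By countable additivity and inner regularity, we can pick finitely many regular thresholds \(\delta_1,\dots,\delta_r<1/m\) and a compact set
\[
 K\subseteq\bigcup_i\{|T_{\delta_i}f|>m\}
\]
with \(\mu_G(K)>1-1/k\). Since the sets \(\{|T_{\delta_i}f|>m\}\) are open and cover \(K\), compactness and continuity give \(\eta>0\) such that every \(x\in K\) satisfies \(|T_{\delta_i}f(x)|>m+\eta\) for some \(i\). We use a finite subcover of the open sets \(\{|T_{\delta_i}f|>m+1/j\}\) for this. Lemma~\ref{main:regular} then shows that every \(g\) sufficiently close to \(f\) keeps each \(\delta_i\) regular and satisfies \(\|T_{\delta_i}g-T_{\delta_i}f\|_\infty<\eta\). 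Consequently \(g\in U_{m,k}\). I expect this step to be the main technical point, since it requires passing from a measure condition to a uniform finite condition.

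\emph{Density.} Given \(h\in C(G)\) and \(\rho>0\), we first approximate \(h\) within \(\rho/2\) by a character polynomial \(h_0\), using Stone--Weierstrass with the characters. Next, Proposition~\ref{main:gluing} (which uses \(\B(G)\)) gives \(u\in\D(G)\) with \(\|u\|_\infty<\rho/2\) and \(\widehat u=0\) on the finite spectrum \(F\) of \(h_0\). Then \(h_0+u\) has coefficients given by those of \(h_0\) on \(F\) and those of \(u\) off \(F\). For \(\delta\) below the least nonzero magnitude on \(F\),
\[
 T_\delta(h_0+u)=h_0+T_\delta u.
\]
Since \(h_0\) is bounded, \(h_0+u\in\D(G)\subseteq U_{m,k}\), and \(\|h-(h_0+u)\|_\infty<\rho\). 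Thus each \(U_{m,k}\) is open and dense, and \(\D(G)\) is a dense \(G_\delta\). Note that \(C(G)\) is a Banach space, so it is a Baire space, although completeness is not needed for the \(G_\delta\) conclusion itself.
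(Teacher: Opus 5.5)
Your proof is correct, and it departs from the paper's mainly in the density step.

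The paper shows that each \(U_{m,k}\) is dense. It approximates by a character polynomial \(q\) and adds one modulated \(\B(G)\) block \(c\eta P\) whose coefficients lie strictly below those of \(q\), so that \(T_{\lambda_j}g=q+T_{\lambda_j}h\) at the coefficient levels of \(h\). It then needs Baire's theorem to make \(\bigcap_{m,k}U_{m,k}\) dense, and only afterwards identifies this intersection with \(\D(G)\).

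You instead prove that \(\D(G)\) itself is dense. You add to \(h_0\) a divergent function from Proposition~\ref{main:gluing} whose transform vanishes on the spectrum of \(h_0\). Since only small \(\delta\) matter, no ordering between the coefficients of \(h_0\) and \(u\) is required. Combined with \(\D(G)=\bigcap_{m,k}U_{m,k}\) and openness, this yields a dense \(G_\delta\) without Baire's theorem at all. Your closing remark therefore undersells your route: completeness of \(C(G)\) is never used.

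The trade-off is this. Your category argument rests on the explicit construction of Proposition~\ref{main:gluing}. The paper's argument uses only \(\B(G)\), and so gives an independent, nonconstructive proof that \(\D(G)\ne\varnothing\).

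The remaining differences are cosmetic. You build the union over all regular thresholds into \(U_{m,k}\). This makes \(\D(G)\subseteq U_{m,k}\) immediate, but moves the reduction to finitely many thresholds into the openness proof. You also obtain the margin \(\eta\) from inner regularity and compactness, whereas the paper gets its margin \(\alpha\) from continuity from below, without invoking regularity.
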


\begin{proof}
For integers \(m,k\geq1\), let \(U_{m,k}\) consist of functions
admitting \(s\geq1\) regular thresholds
\(\delta_1,\ldots,\delta_s\in(0,1/k)\) with
\begin{equation}\label{main:U}
 \mu_G\left\{\max_{1\leq j\leq s}|T_{\delta_j}f|>m\right\}>1-2^{-k}.
\end{equation}
\emph{Openness.}
Suppose that \(f\in U_{m,k}\). Choose regular thresholds
\(\delta_1,\ldots,\delta_s\) for which \eqref{main:U} holds, and set
\[
 M_f=\max_{1\leq j\leq s}|T_{\delta_j}f|.
\]
Since
\[
 \{M_f>m\}=\bigcup_{\ell\geq1}\{M_f>m+1/\ell\},
\]
continuity from below yields some \(\alpha>0\) such that
\[
 \mu_G\{M_f>m+\alpha\}>1-2^{-k}.
\]
For each \(j\), let
\[
 S_j=\{\gamma\in\widehat G:
             |\widehat f(\gamma)|\geq\delta_j\}.
\]
By Lemma~\ref{main:regular}, if \(g\) is sufficiently close to \(f\),
then each \(\delta_j\) remains regular for \(g\), its selected set
remains \(S_j\), and
\[
 \|T_{\delta_j}g-T_{\delta_j}f\|_\infty
 \leq |S_j|\,\|g-f\|_\infty.
\]
Define
\[
 M_g=\max_{1\leq j\leq s}|T_{\delta_j}g|.
\]
Then
\[
 \|M_g-M_f\|_\infty
 \leq \max_{1\leq j\leq s}|S_j|\,\|g-f\|_\infty.
\]
Thus \(\|M_g-M_f\|_\infty<\alpha\) whenever \(g\) is sufficiently
close to \(f\), and hence
\[
 \{M_f>m+\alpha\}\subseteq\{M_g>m\}.
\]
Therefore \(g\in U_{m,k}\), proving that \(U_{m,k}\) is open.

\emph{Density.}
Fix \(f_0\in C(G)\) and \(r>0\).
Characters contain constants, separate points, and are closed
under products and conjugation. The Stone--Weierstrass theorem
\cite[Theorem~1.2.4(a)]{Rudin} supplies a character polynomial \(q\)
with \(\|q-f_0\|_\infty<r/2\).
Choose \(0<c<\min(r/2,1/k)\), also smaller than every nonzero
coefficient magnitude of \(q\), if \(q\ne0\).
By \(\B(G)\), choose \(P\) with norm at most one and
\[
 \mu_G\{T^*_GP>(m+\|q\|_\infty+1)/c\}>1-2^{-k}.
\]
By Lemmas~\ref{main:infinitedual} and \ref{main:modulation},
modulate to avoid the spectrum of \(q\), and put
\(h=c\eta P\), \(g=q+h\). Then \(\|g-f_0\|_\infty<r\).
Let \(\lambda_1,\ldots,\lambda_s\) be the distinct nonzero
coefficient magnitudes of \(h\). They are at most \(c<1/k\),
and spectral disjointness and the choice of \(c\) give
\[
 T_{\lambda_j}g=q+T_{\lambda_j}h.
\]
On the stated large set their maximum modulus exceeds \(m+1\).
For each \(j\), Lemma~\ref{main:replacement} supplies a threshold
\(\delta'_j\in(0,\lambda_j)\), regular for \(g\), with
\[
 T_{\delta'_j}g=T_{\lambda_j}g,\qquad
 0<\delta'_j<\lambda_j\leq c<1/k.
\]
Thus the sums are unchanged and all thresholds remain admissible.
They establish \eqref{main:U}, proving density.

The complete metric space \(C(G)\) satisfies Baire's theorem;
separability is not required. Hence
\[
 \mathcal R=\bigcap_{m\geq1}\bigcap_{k\geq1}U_{m,k}
\]
is a dense \(G_\delta\).
For \(f\in\mathcal R\) and fixed \(m\), the measurable sets
\[
 E_{m,k}=\{x:\sup_{0<\delta<1/k}|T_\delta f(x)|>m\}
\]
decrease with \(k\) and have measure greater than \(1-2^{-k}\).
Continuity from above gives
\[
 \mu_G\left(\bigcap_{k\geq1}E_{m,k}\right)
 =\lim_{k\to\infty}\mu_G(E_{m,k})
 \geq\lim_{k\to\infty}(1-2^{-k})=1.
\]
The decay of the exceptional bound is essential at this step.
Intersecting also over \(m\) proves \(\mathcal R\subseteq\D(G)\).

Conversely, if \(f\in\D(G)\), for fixed \(m,k\) the union of
the superlevel sets for thresholds in \((0,1/k)\) has measure one.
There are only countably many distinct sums, by
Lemma~\ref{main:thresholds}. Each can be represented at a regular
threshold still in this interval, by Lemma~\ref{main:replacement}.
Enumerate the resulting superlevel sets and take their increasing
finite unions. Their union has measure one, so continuity from below
gives a finite union of measure greater than \(1-2^{-k}\). The
corresponding finite regular thresholds show that \(f\in U_{m,k}\).
It follows that \(\D(G)=\mathcal R\).
\end{proof}

We now have all the ingredients needed to prove the main result.
\begin{proof}[Proof of Theorem~\ref{main:theorem}]
For infinite \(G\), combine Propositions~\ref{main:allB},
\ref{main:gluing}, and \ref{main:category}.
For finite \(G\), the dual is finite and a threshold below every
nonzero coefficient magnitude selects the whole expansion:
\(T_\delta f=f\) for all sufficiently small \(\delta>0\).
The same is immediate for \(f=0\).
\end{proof}

\subsection{Scope and consequences}
We conclude with a consequence of Theorem~\ref{main:theorem} and a
related structural observation.
\begin{corollary}\label{main:subgroup}
For every infinite subgroup \(\Lambda\subseteq\widehat G\), every
finite set \(F\subseteq\widehat G\), and every \(\rho>0\), there is an
\(f\in\D(G)\) such that
\[
 \|f\|_\infty<\rho,\qquad
 \supp\widehat f\subseteq\Lambda,
 \qquad \widehat f(\gamma)=0,\quad \gamma\in F.
\]
\end{corollary}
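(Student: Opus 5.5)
The plan is to pass to a quotient of \(G\) whose dual is \(\Lambda\), apply Theorem~\ref{main:theorem} there, and pull the example back with Lemma~\ref{main:quotient}. Let \(N=\Lambda^\perp=\{x\in G:\gamma(x)=1\text{ for all }\gamma\in\Lambda\}\). This is a closed subgroup, and \(H=G/N\) is a compact Hausdorff abelian group. Let \(q:G\to H\) be the quotient map. By Pontryagin duality, the characters of \(H\), composed with \(q\), are exactly the characters of \(G\) that are trivial on \(N\); that is, they form \((\Lambda^\perp)^\perp=\Lambda\). More concretely, \(\lambda\mapsto\lambda\circ q\) identifies \(\widehat H\) with \(\Lambda\). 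The annihilator identity \((\Lambda^\perp)^\perp=\Lambda\) for subgroups of the discrete dual is standard; see \cite{Rudin}.

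Since \(\Lambda\) is infinite, \(\widehat H\) is infinite, and so \(H\) is infinite. Let \(F_H=\{\lambda\in\widehat H:\lambda\circ q\in F\}\), which is finite. Theorem~\ref{main:theorem} applied to \(H\) then gives \(g\in\D(H)\) with \(\|g\|_\infty<\rho\) and \(\widehat g=0\) on \(F_H\). Put \(f=g\circ q\). By Lemma~\ref{main:quotient}, \(f\in\D(G)\) and \(\|f\|_\infty=\|g\|_\infty<\rho\). The same lemma gives \(\widehat f(\gamma)=\widehat g(\lambda)\) when \(\gamma=\lambda\circ q\), and \(\widehat f(\gamma)=0\) otherwise. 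Hence \(\supp\widehat f\subseteq\{\lambda\circ q\}=\Lambda\). For \(\gamma\in F\) there are two cases. If \(\gamma\notin\Lambda\), then \(\widehat f(\gamma)=0\) directly. If \(\gamma=\lambda\circ q\), then \(\lambda\in F_H\), so \(\widehat f(\gamma)=\widehat g(\lambda)=0\).

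The main point to verify is that the characters trivial on \(N\) are exactly \(\Lambda\), and not a larger subgroup. If I wanted to avoid citing duality, I would argue as follows. Suppose \(\gamma\notin\Lambda\) but \(\gamma\) is trivial on \(N\). Then \(\gamma\) factors through the compact group \(q(G)\), which embeds into \(\prod_{\lambda\in\Lambda}\mathbb T\) via \(x\mapsto(\lambda(x))_{\lambda\in\Lambda}\). On that compact subgroup, \(\gamma\) would be a continuous character. Every such character is a monomial in the coordinate characters, so it lies in the group generated by \(\Lambda\), namely \(\Lambda\) itself, which is a contradiction. The fact that characters of a closed subgroup of a torus extend to the whole torus is itself a piece of duality, so the cleanest route is simply to cite the annihilator theorem.
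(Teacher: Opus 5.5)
Your proposal is correct and follows essentially the same route as the paper. Both arguments quotient by \(N=\Lambda^\perp\), identify \(\widehat{G/N}\) with \(\Lambda\) via the annihilator theorem (every subgroup of the discrete dual is closed), and apply Theorem~\ref{main:theorem} on the resulting infinite quotient while avoiding the part of \(F\) lying in \(\Lambda\). The example is then pulled back by Lemma~\ref{main:quotient}. Your explicit verification of the support inclusion and of the two cases for \(\gamma\in F\) simply spells out what the paper leaves implicit.
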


\begin{proof}
Let \(N=\Lambda^\perp\). Pontryagin duality identifies
\(\widehat{G/N}\) with \(\Lambda\): the double annihilator is
the closure of \(\Lambda\), which is already closed in the
discrete dual. Thus \(G/N\) is infinite. Apply the theorem on
this quotient, avoiding the given finite set intersected with
\(\Lambda\), and pull back by Lemma~\ref{main:quotient}.
\end{proof}

\begin{remark}
Every individual \(f\in C(G)\) factors through a compact metrizable
abelian quotient. Indeed, choose polynomials \(q_n\to f\) uniformly.
The subgroup \(\Lambda\) generated by their spectra is countable.
The evaluation map \(x\mapsto(\lambda(x))_{\lambda\in\Lambda}\)
has compact image in the metrizable group \(\mathbb T^\Lambda\).
Every \(q_n\), and hence \(f\), is constant on its fibers.
The descended function is continuous since a continuous surjection
from compact to Hausdorff is a quotient map.
This observation is compatible with, but not needed for, the
nonseparable Baire argument above.
\end{remark}

\begingroup
\sloppy

\endgroup

\begin{thebibliography}{99}
\bibitem{CF98}
A.~C\'ordoba and P.~Fern\'andez,
\emph{Convergence and divergence of decreasing rearranged Fourier series},
SIAM J. Math. Anal. \textbf{29} (1998), 1129--1139.
DOI: \nolinkurl{10.1137/S0036141097320705}.
\bibitem{E11}
S.~A. Episkoposian,
\emph{On greedy algorithms with respect to generalized Walsh system},
arXiv:1109.3806, 2011.
\bibitem{EG12}
S.~A. Episkoposian and M.~G. Grigorian,
\emph{\(L^p\)-convergence of greedy algorithm by generalized Walsh system},
J. Math. Anal. Appl. \textbf{389} (2012), 1374--1379.
DOI: \nolinkurl{10.1016/j.jmaa.2012.01.014}.
\bibitem{EGG24}
S.~A. Episkoposian, M.~G. Grigorian, and T.~M. Grigorian,
\emph{On the universal pair with respect to the generalized Walsh system},
Adv. Oper. Theory \textbf{9} (2024), article 3.
DOI: \nolinkurl{10.1007/s43036-023-00301-w}.
\bibitem{G73}
J.~Gosselin, \emph{Almost everywhere convergence of Vilenkin--Fourier series},
Trans. Amer. Math. Soc. \textbf{185} (1973), 345--370.
\bibitem{GY75}
J.~A. Gosselin and W.~S. Young,
\emph{On rearrangements of Vilenkin--Fourier series which preserve
almost everywhere convergence},
Trans. Amer. Math. Soc. \textbf{209} (1975), 157--174.
\bibitem{GS18}
M.~G. Grigoryan and S.~A. Sargsyan,
\emph{Almost everywhere convergence of greedy algorithm with respect to
Vilenkin system},
J. Contemp. Math. Anal. \textbf{53} (2018), 331--345.
DOI: \nolinkurl{10.3103/S1068362318060043}.
\bibitem{HR74}
E.~Hewitt and K.~A. Ross,
\emph{Rearrangements of \(L^r\) Fourier series on compact abelian groups},
Proc. London Math. Soc. (3) \textbf{29} (1974), 317--330.
DOI: \nolinkurl{10.1112/plms/s3-29.2.317}.
\bibitem{K96}
T.~W. K\"orner, \emph{Divergence of decreasing rearranged Fourier series},
Ann. of Math. (2) \textbf{144} (1996), 167--180.
\bibitem{K99}
T.~W. K\"orner, \emph{Decreasing rearranged Fourier series},
J. Fourier Anal. Appl. \textbf{5} (1999), 1--19.
DOI: \nolinkurl{10.1007/BF01274186}.
\bibitem{K06}
T.~W. K\"orner, \emph{Hard summation, Olevskii, Tao and Walsh},
Bull. Lond. Math. Soc. \textbf{38} (2006), 705--729.
DOI: \nolinkurl{10.1112/S002460930601890X}.
\bibitem{KO97}
S.~Kostyukovsky and A.~Olevskii,
\emph{Note on decreasing rearrangement of Fourier series},
J. Appl. Anal. \textbf{3} (1997), 137--142.
DOI: \nolinkurl{10.1515/JAA.1997.137}.
\bibitem{N07}
M.~Nielsen,
\emph{An example of an almost greedy uniformly bounded orthonormal basis
for \(L^p(0,1)\)},
J. Approx. Theory \textbf{149} (2007), 188--192.
DOI: \nolinkurl{10.1016/j.jat.2007.04.011}.
\bibitem{O75}
A.~M. Olevskii,
\emph{Fourier Series with Respect to General Orthogonal Systems},
Ergebnisse der Mathematik und ihrer Grenzgebiete, vol.~86,
Springer-Verlag, Berlin--New York, 1975.
\bibitem{O22}
G.~G. Oniani,
\emph{On the divergence sets of Fourier series in systems of characters
of compact abelian groups},
Math. Notes \textbf{112} (2022), 100--108.
DOI: \nolinkurl{10.1134/S0001434622070112}.
\bibitem{Rudin}
W.~Rudin, \emph{Fourier Analysis on Groups},
Interscience Publishers, New York, 1962.
\end{thebibliography}
\end{document}